\newtheorem{thm}{Theorem}
\newtheorem{lemma}[thm]{Lemma}
\newtheorem{cor}[thm]{Corollary}
\theoremstyle{definition}
\newtheorem{rem}[thm]{Remark}
\newtheorem{condition}[thm]{Condition}
\numberwithin{definition}{section}
\numberwithin{proc}{section}
\numberwithin{equation}{section}
\numberwithin{condition}{section}
\numberwithin{condition}{section}
\numberwithin{prop}{section}
\numberwithin{thm}{section}
\numberwithin{lemma}{section}
\numberwithin{rem}{section}
\numberwithin{cla}{section}
\numberwithin{obs}{section}
\numberwithin{cor}{section}
\numberwithin{conjecture}{section}
\definecolor{webgreen}{rgb}{0,.5,0}
\definecolor{Maroon}{HTML}{800000}
\newcommand{\bE}{\mathbb{E}}
\newcommand\vbfd{{\vec{\mathbf{d}}}}
\newcommand\dsD{{\mathbb D}}
\newcommand\dsE{{\mathbb E}}
\newcommand\dsG{{\mathbb G}}
\newcommand\dsN{{\mathbb N}}
\newcommand\dsZ{{\mathbb Z}}
\newcommand{\Z}{{\mathbb Z}}
\newcommand{\N}{{\mathbb N}}
\newcommand{\da}{\downarrow}
\newcommand{\ua}{\uparrow}
\newcommand\cA{{\cal A}}
\newcommand\cE{{\cal E}}
\newcommand\cF{{\cal F}}
\newcommand\cH{{\cal H}}
\newcommand\cI{{\cal I}}
\newcommand\cL{{\cal L}}
\newcommand\cN{{\cal N}}
\newcommand\cP{{\cal P}}
\newcommand\cU{{\cal U}}
\newcommand\cV{{\cal V}}
\newcommand\cX{{\cal X}}
\newcommand{\E}[1]{{\mathbb E}\left[#1\right]}
\newcommand{\ee}[1]{{\mathbb E}[#1]}
\newcommand{\p}[1]{{\mathbb P}\left\{#1\right\}}
\newcommand{\pp}[1]{{\mathbb P}\{#1\}}
\newcommand\cond{\;\middle|\;}
\DeclarePairedDelimiter{\floor}{\lfloor}{\rfloor}
\DeclarePairedDelimiter{\ceil}{\lceil}{\rceil}
\newcommand{\tin}{\mathrm{in}}
\newcommand{\tout}{\mathrm{out}}
\newcommand\hnu{{\hat \nu}}
\DeclareMathOperator{\dist}{dist}
\newcommand\din{D_{{\tin}}}
\newcommand\dinp{\din^{+}}
\newcommand\dnin{(D_{n})_{\tin}}
\newcommand\dninp{\dnin^{+}}
\newcommand\dout{D_{{\tout}}}
\newcommand\doutm{\dout^{-}}
\newcommand\dnout{(D_{n})_{\tout}}
\newcommand\dnoutm{\dnout^{-}}
\newcommand\Qnda{Q_{n}^{\downarrow}}
\newcommand\Qnua{Q_{n}^{\uparrow}}
\newcommand\vecGn{\vec{\dsG}_{n}}
\newcommand\GW{\mathrm{GW}}
\newcommand\giant{\mathcal{G}_{n}}
\newcommand\NtoO[1]{\cN^{\le \omega}(#1)}
\newcommand\NtoOXpm{\NtoO{\cX^{\pm}}}
\newcommand\scc{\textsc{scc}\xspace}
\title{The giant component of the directed configuration model revisited}
\author[*]{Xing Shi Cai}
\author[**]{Guillem Perarnau}
\affil[*]{\small\it Uppsala University, Sweden. Email:~{\tt xingshi.cai@math.uu.se}.}
\affil[**]{\small\it Departament de Matem\`atiques. UPC. Email:~{\tt guillem.perarnau@upc.edu}.}
\begin{document}
\maketitle

\begin{abstract}
We prove a law of large numbers for the order and size of the largest strongly connected component in the directed configuration model. Our result extends previous work by Cooper and
Frieze~\cite{cooper2004}.
\end{abstract}

\section{Introduction and notations}

An \scc (strongly connected component) in a digraph (directed graph) is a maximal sub-digraph in
which there exists a directed path from every node to every other node.  In this short note, we analyse
the size of the giant component, i.e., the largest \scc, in the directed configuration model.  This
is a continuation of our previous work \cite{cai2020a}, which studied the diameter of the model.  

We briefly introduce the model and our assumptions. For further discussions and references, see
\cite{cai2020a}. Let \([n]\coloneqq \{1,\dots,n\}\) be a set of \(n\) nodes.  Let
\(\vbfd_n=((d^{-}_1,d_1^{+}),\dots, (d^{-}_n,d^{+}_n))\) be a bi-degree sequence with \(m_n\coloneqq \sum_{i\in
[n]} d^{+}_i = \sum_{i \in [n]}d^{-}_{i}\).  The directed configuration model, \(\vecGn\), is the
random directed multigraph on \([n]\) generated by giving \(d^{-}_{i}\) in half-edges (\emph{heads})
and \(d^{+}_{i}\) out half-edges (\emph{tails}) to node \(i\), and then pairing the heads and tails
uniformly at random.

Let \(D_{n}=(D_{n}^{-},D_{n}^{+})\) be the degrees (number of tails and heads) of a uniform random node.
Let \(n_{k,\ell}\) be the number of \((k,\ell)\) in \(\vbfd_{n}\).
Let \(\Delta_n= \max_{i\in[n]} \{d^{-}_i,d^{+}_i\}\).
Consider a sequence of bi-degree sequences \((\vbfd_n)_{n\geq 1}\). 
Throughout the paper, we will assume the following condition is satisfied,
\begin{condition}\label{cond:main}
    There exists a discrete probability distribution \(D=(D^{-},D^{+})\) on \(\dsZ_{\ge 0}^2\) with \(\lambda_{k,
    \ell}\coloneqq\p{D=(k,\ell)}\) such that 
    \begin{enumerate}[(i)]
        \item \(D_n\) converges to \(D\) in distribution: \(\lim_{n\to \infty} \frac{n_{k,\ell}}{n}  = \lambda_{k,\ell}\) for every \(k,\ell\in
                \dsZ_{\geq 0};\)
        \item \(D_n\) converges to \(D\) in expectation and the expectation is finite: 
            \begin{equation}
                \lim_{n\to \infty} \dsE[D^{-}_n] =\lim_{n\to \infty} \dsE[D^{+}_n] =
                \dsE[D^{-}]=\dsE[D^{+}]\eqqcolon\lambda \in (0, \infty);
            \end{equation}
        \item \(D_n\) converges to \(D\) in second moment and they are finite: for \(i,j\in \Z_{\geq 0}\), \(i+j=2\),
               \begin{equation}
               \lim_{n\to \infty} \dsE[(D^{-}_n)^i(D^{+}_n)^j] = \dsE[(D^{-})^i(D^{+})^j] < \infty
               \end{equation} 
                
    \end{enumerate}
\end{condition}

To state the main result, some parameters of \(D\) are needed. 
Let
\begin{equation}\label{eq:supercri}
\nu
\coloneqq
\frac{\dsE[D^{-}D^{+}]}{\lambda}
< \infty
,
\end{equation}
where the inequality follows from conditions (ii) and (iii).  Let 
\(    f(z,w)\coloneqq \sum_{i ,j \ge 0} \lambda_{i,j} z^{i} w^{j}\)
be the bivariate generating function of \(D\). Let \(s_{-}\) and \(s_{+}\) be the survival probabilities
of the branching processes with offspring distributions which have generating functions
\(\frac{1}{\lambda}\frac{\partial f}{\partial w}(z,1)\) and \(\frac{1}{\lambda}\frac{\partial
f}{\partial z}(1,w)\) respectively.  In other words, \(\rho_{-} \coloneqq 1-s_{-}\) and
\(\rho_{+} \coloneqq 1-s_{+}\) are, respectively, the smallest
positive solutions to the equations
\begin{equation}\label{QPICZ}
    z = \frac{1}{\lambda} \frac{\partial f}{\partial w} (z, 1),
    \qquad
    w = \frac{1}{\lambda} \frac{\partial f}{\partial z} (1, w).
\end{equation}

Let \(\giant\) be the largest \scc in \(\vecGn\). (If there is more than one such \scc, we choose
an arbitrary one among them as \(\giant\).)  Let \(v(\giant)\) be the number of nodes in \(\giant\).
Let \(e(\giant)\) be the number of edges in \(\giant\).
Our main result is the following theorem on \(\giant\):
\begin{thm}\label{thm:giant}
    Suppose that \((\vbfd_n)_{n \ge 1}\) satisfies~\autoref{cond:main}. 
    If \(\nu > 1\), then 
    \begin{gather}
        \frac{
            v(\giant)
        }{n}
        \to
        \eta
        <
        \infty
        ,
		\label{ONDOS}        
        \\
        \frac{
            e(\giant)
        }{n}
        \to
        \lambda s_{-} s_{+}
        <
        \infty
        ,
        \label{MYYET}
    \end{gather}
    in expectation, in second moment and in probability,
    where
    \begin{equation}
        \eta
        \coloneqq \sum_{i ,j \ge 0} 
        \lambda_{i,j} 
        (1-\rho_{-}^{i})
        (1-\rho_{+}^{j})
    = 1+f(\rho_{-},\rho_{+})- f(\rho_{-},1)-f(1,\rho_{+})
        .
        \label{FXADH}
    \end{equation}
    If \(\nu < 1\), then for all \(a_{n}\) with \(a_{n} \to \infty\)
    \begin{equation}\label{FSXMO}
        \frac{
            v(\giant)
        }{a_{n}}
        \to
        0
        ,
    \end{equation}
    in expectation and in probability.
\end{thm}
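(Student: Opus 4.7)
The plan is to analyze the giant SCC via parallel forward and backward breadth-first search (BFS) explorations from the vertices of $\vecGn$, coupled to two Galton-Watson branching processes whose extinction probabilities are $\rho_{-}$ and $\rho_{+}$. Write $\cX^{+}(v)$ for the out-component of $v$ (vertices reachable from $v$) and $\cX^{-}(v)$ for its in-component. A vertex lies in $\giant$ iff both $|\cX^{+}(v)|$ and $|\cX^{-}(v)|$ are large, so it suffices to work with a slowly growing threshold $\omega = \omega(n) \to \infty$ and count the number $N_{\omega}$ of vertices whose one-sided explorations both reach size $\omega$.

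In the supercritical regime $\nu > 1$, the first task is to couple the partial BFS from a vertex $v$ of degree $(i,j)$ to the first few generations of the branching processes so that
\begin{equation*}
\p{|\cX^{+}(v)| \ge \omega} = 1 - \rho_{+}^{j} + o(1), \qquad \p{|\cX^{-}(v)| \ge \omega} = 1 - \rho_{-}^{i} + o(1),
\end{equation*}
and so that these two events are asymptotically independent (the two explorations use disjoint half-edges provided $\omega$ grows slowly relative to $\sqrt{n}$). Summing over degree classes via \autoref{cond:main} yields $\E{N_{\omega}}/n \to \eta$, and an analogous two-vertex calculation gives $\E{N_{\omega}^{2}}/n^{2} \to \eta^{2}$.

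The main obstacle is the \emph{uniqueness} step: showing that with high probability all $N_{\omega}$ vertices belong to a single SCC, which must then be $\giant$. I would first upgrade the survival statement to a dichotomy: conditional on $|\cX^{+}(v)| \ge \omega_{0}$ for an intermediate threshold $\omega_{0} = o(\sqrt{n})$, the exploration actually grows to linear size with high probability, by the standard supercritical branching-process argument. Then, for any two vertices $u, v$ with $|\cX^{+}(u)|, |\cX^{-}(v)| = \Theta(n)$, a sprinkling argument matching the $\Theta(n)$ unpaired tails of $\cX^{+}(u)$ against the $\Theta(n)$ unpaired heads of $\cX^{-}(v)$ produces a $u \to v$ path with probability $1 - \exp(-\Omega(n))$; a union bound over $O(n^{2})$ pairs then gives uniqueness. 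This forces $v(\giant) = N_{\omega} + o(n)$, establishing~\eqref{ONDOS} in expectation, second moment, and probability.

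For~\eqref{MYYET}, an edge $(u,v)$ lies in $\giant$ iff both endpoints do, and by the above characterization this happens iff $|\cX^{-}(u)|$ and $|\cX^{+}(v)|$ are both large (the other two one-sided explorations inherit size from the edge itself). Since sampling a uniform edge size-biases its tail endpoint by $D^{+}$ and its head endpoint by $D^{-}$, the fixed-point equations~\eqref{QPICZ} imply that the marginals of these two events converge to $s_{-}$ and $s_{+}$ respectively; their asymptotic independence then yields $e(\giant)/m_{n} \to s_{-} s_{+}$, hence $e(\giant)/n \to \lambda s_{-} s_{+}$. In the subcritical regime $\nu < 1$, both branching processes are subcritical with finite-mean total progeny; the SCC of any vertex $v$ is contained in $\cX^{+}(v) \cap \cX^{-}(v)$, and a first-moment count over potential directed cycles gives $\E{v(\giant)} = O(1)$, which implies~\eqref{FSXMO} in expectation and hence in probability.
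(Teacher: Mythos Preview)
Your overall strategy matches the paper's: identify the giant with the set $\cL$ of vertices whose forward and backward BFS both reach a threshold $\omega$, establish $\ee{|\cL|}/n\to\eta$ and $\ee{|\cL|^{2}}/n^{2}\to\eta^{2}$ via branching-process coupling (the paper's \autoref{lem:L:mmt} and \autoref{lem:Le:mmt}), show $\cL$ is a single \scc, and handle $\nu<1$ by a first-moment cycle count (the paper's \autoref{lem:cycle}).

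The one substantive divergence is your uniqueness step, and as phrased it does not work. You propose to upgrade $\omega$-survival to $|\cX^{+}(u)|=\Theta(n)$ and then sprinkle by matching ``the $\Theta(n)$ unpaired tails of $\cX^{+}(u)$'' against the unpaired heads of $\cX^{-}(v)$. But the \emph{full} out-component $\cX^{+}(u)$ has no unpaired tails: by maximality every tail incident to a vertex of $\cX^{+}(u)$ is already paired to a head inside $\cX^{+}(u)$, so there is nothing left to sprinkle with. The fix is to stop the exploration at an intermediate scale, and this is exactly what the paper does, without ever passing to linear size. It invokes \cite[Proposition~7.2]{cai2020a}: once $|\cX^{+}|,|\cX^{-}|\ge\omega=\log^{6}n$, the probability that $\dist(\cX^{+},\cX^{-})=\infty$ is already $o(n^{-100})$, so a single union bound over all pairs of half-edges puts all of $\cL$ in one \scc. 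The paper also makes the complementary bound explicit via a dichotomy (\cite[Proposition~6.1]{cai2020a}): every half-edge either reaches width $\omega$ within $O(\log n)$ steps or its forward neighbourhood dies out, so any vertex outside $\cL$ has one side of size $O(\omega\log n)=O(\log^{7}n)$, bounding every non-giant \scc. Your implicit containment $\giant\subseteq\cL$ once $|\giant|\ge\omega$ gives the same conclusion, but the paper's route yields the polylogarithmic bound on the second \scc for free.
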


\begin{rem}
Under~\autoref{cond:main}, the probability that \(\vecGn\) is simple is
    bounded away from \(0\), see~\cite{blanchet2013,janson2009}. Thus~\autoref{thm:giant} holds for a uniform random \emph{simple} digraph with degree
    sequence \(\vbfd_{n}\).
\end{rem}

The two cases \(\nu < 1\) and \(\nu > 1\) are often referred to as \emph{subcritical} and
\emph{supercritical} regimes.  As shown in \cite{cai2020a}, in the supercritical case, \(s_{\pm} > 0\) and \(\eta > 0\). In other words,
whp (with high probability), the size of the largest \scc is \emph{bounded} in the first case and
\emph{linear} in the second one.

Equation \eqref{ONDOS} in \autoref{thm:giant} was first proved by Cooper and
Frieze~\cite{cooper2004} under stronger conditions including \(\ee{(D_{n}^{+})^{2} D_{n}^{-}}  = o(\Delta_{n})\),
        \(\ee{(D_{n}^{-})^{2} D_{n}^{+}}  = o(\Delta_{n})\) and \(\Delta_n= o(n^{1/12})\).
Graf~\cite[Theorem 4.1]{graf2016} extended the existence of a linear order \scc  provided that \(\ee{D_{n}^{+}D_{n}^{-}}\) converges uniformly and  \(\Delta_n =o(n^{1/4})\).  \autoref{cond:main} only implies that \(\Delta_{n} = o(\sqrt{n})\), see
\cite[Corollary~2.4]{cai2020a}.
In the subcritical case, the results in \cite{cooper2004,graf2016} only show that
whp the largest \scc has order \(O(\Delta_{n}^{2} \log{n})\) instead of \(O(1)\). 

The paper is organized as follows: In \autoref{sec:branching}, we study the probability of
certain events for branching processes. 
In \autoref{sec:coupl}, we recall a graph exploration process defined
in \cite{cai2020a} and extend it. \autoref{sec:tower} studies the probability that a
set of half-edges to reach a large number of other half-edges. \autoref{sec:L} shows that the number
of nodes which can reach and can be reached from many nodes is concentrated around its mean.
Then in \autoref{sec:final} we show that these nodes form the giant. Finally in \autoref{sec:eg} we
give an application of \autoref{thm:giant} to binomial random digraphs. 

\section{Branching processes}\label{sec:branching}

Let \(\xi\) be a random variable on \(\dsZ_{\ge 0}\) and let
\((\xi_{i,t})_{i\ge 1, t \ge 0}\) be iid (independent and identically distributed) copies of \(\xi\). Let \(h_{\xi}\) be the generating function of \(\xi\) and \(\nu_{\xi}\coloneqq h_{\xi}'(1)=\E{\xi}\).
Let  \((X_t)_{t \ge 0}\) be a  branching process with offspring
distribution \(\xi\).  If \(X_{t} > 0\) for all \(t\), then the branching process is said to \emph{survive}; otherwise, it
is said to \emph{become extinct}.  The following are well-known in the branching process theory (see,
e.g., \cite[Theorem 3.1]{vanderhofstad2020} and \cite[Theorem~I.10.3]{athreya1972}, respectively):
\begin{lemma}\label{thm:survive}
    Let \(\rho_{\xi}\) be the smallest nonnegative solution of \(z = h_{\xi}(z)\). The survival
    probability is
    \begin{equation}\label{JIMZD}
        s_{\xi} \coloneqq \pp{\cap_{t \ge 1} [X_{t} > 0]} = 1 - \rho_{\xi}.
    \end{equation}
    Moreover, \(s_{\xi} > 0\) if and only if \(\nu_{\xi} > 1\).
\end{lemma}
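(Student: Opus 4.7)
The plan is to prove the classical Galton–Watson survival dichotomy via generating function iteration, treating the two assertions in order. Throughout I assume \(X_{0}=1\).

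First I would set \(q_{t}\coloneqq \pp{X_{t}=0}\) and observe, by conditioning on the number of children of the root and using independence, that \(q_{t+1}=h_{\xi}(q_{t})\). Since the events \([X_{t}=0]\) are nested in \(t\) and their union is the extinction event \(\bigcup_{t\ge 1}[X_{t}=0]\), the extinction probability equals \(q\coloneqq \lim_{t\to\infty} q_{t}\), so \(s_{\xi}=1-q\). Starting from \(q_{0}=0\) and noting that \(h_{\xi}\colon[0,1]\to[0,1]\) is continuous and non-decreasing (a probability generating function), the sequence \((q_{t})\) is monotone increasing and bounded by \(1\), hence converges to a fixed point of \(h_{\xi}\).

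Next I would identify this fixed point with \(\rho_{\xi}\). Because \(h_{\xi}\) is monotone on \([0,1]\) and \(q_{0}=0\le \rho_{\xi}\), induction gives \(q_{t}\le \rho_{\xi}\) for all \(t\); hence \(q\le \rho_{\xi}\). Conversely, \(q\) is itself a nonnegative fixed point, so by minimality of \(\rho_{\xi}\), \(q\ge \rho_{\xi}\). This yields \(q=\rho_{\xi}\) and thus \(s_{\xi}=1-\rho_{\xi}\), establishing \eqref{JIMZD}.

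For the dichotomy \(s_{\xi}>0\iff \nu_{\xi}>1\), I would study \(g(z)\coloneqq h_{\xi}(z)-z\) on \([0,1]\). The key inputs are that \(h_{\xi}\) is convex on \([0,1]\) (second derivative is a series with nonnegative coefficients) and \(g(1)=0\) with \(g'(1)=\nu_{\xi}-1\). If \(\nu_{\xi}>1\), then \(g'(1)>0\), so \(g(z)<0\) for \(z\) slightly below \(1\); combined with \(g(0)=\pp{\xi=0}\ge 0\), the intermediate value theorem gives a fixed point in \([0,1)\), so \(\rho_{\xi}<1\) and \(s_{\xi}>0\). If \(\nu_{\xi}\le 1\), convexity of \(h_{\xi}\) together with \(h_{\xi}(1)=1\) forces \(h_{\xi}(z)>z\) for all \(z\in[0,1)\) (one would need to handle the degenerate case \(\pp{\xi=1}=1\) separately, where the process is deterministic and extinction probability is \(0\), but then \(\nu_{\xi}=1\) and one checks \(\rho_{\xi}=0\) directly, so \(s_{\xi}=1\); otherwise strict convexity applies), so \(\rho_{\xi}=1\) and \(s_{\xi}=0\).

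The main obstacle is the boundary case \(\nu_{\xi}=1\): strict convexity is needed to exclude fixed points in \([0,1)\), which fails for the degenerate offspring distribution \(\xi\equiv 1\); treating this edge case cleanly (where technically the process never becomes extinct yet \(h_{\xi}\) is the identity on \([0,1]\)) requires a separate check. Everything else reduces to standard monotone convergence and elementary properties of generating functions.
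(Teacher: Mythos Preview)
The paper does not actually prove this lemma: it is stated as well-known and attributed to \cite[Theorem~3.1]{vanderhofstad2020} and \cite[Theorem~I.10.3]{athreya1972}. Your argument is exactly the classical proof given in those references---iterate \(h_{\xi}\) from \(0\), use monotonicity to converge to the smallest fixed point, then use convexity of \(h_{\xi}\) together with \(h_{\xi}'(1)=\nu_{\xi}\) to locate that fixed point relative to \(1\)---and it is correct.

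Your flag on the degenerate case \(\pp{\xi=1}=1\) is apt and worth keeping: there \(\nu_{\xi}=1\) while \(s_{\xi}=1\), so the ``only if'' direction of the dichotomy as literally stated fails. The textbook formulation adds the hypothesis \(\pp{\xi=1}<1\) (equivalently, that \(h_{\xi}\) is strictly convex somewhere), and in the paper's applications the offspring distributions \(\dinp\), \(\doutm\) are never degenerate under \autoref{cond:main} with \(\nu>1\), so the caveat is harmless in context.
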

\begin{lemma}\label{thm:GW:limit}
    Assume that \(\nu_{\xi} \in (1,\infty)\).  Then there exists a sequence
    \((m_{\xi,t})_{t \ge 0}\) for which \(m_{\xi,t}^{1/t} \to \nu\), such that \(X_{t}/m_{\xi,t}
    \to W_{\xi}\),
    where \(W_{\xi}\) is a non-negative random variable for which \(\p{W_{\xi} = 0} = 1-s_{\xi}\) and
    which is continuously distributed on \((0,\infty)\).
\end{lemma}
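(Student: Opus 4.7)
The plan is to follow the Seneta--Heyde approach via iterated probability generating functions, since no \(L\log L\) moment condition on \(\xi\) is assumed and hence the normalizing sequence cannot in general be taken to be \(\nu_\xi^{t}\). Write \(h_{\xi,t}(s) = \E{s^{X_{t}}}\) for the iterated p.g.f., which satisfies \(h_{\xi,t+1} = h_\xi \circ h_{\xi,t} = h_{\xi,t} \circ h_\xi\) and, by supercriticality and \autoref{thm:survive}, converges pointwise on \([0,1)\) to \(\rho_\xi = 1 - s_\xi\).

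I would fix any \(s_0 \in (\rho_\xi, 1)\) and define \(m_{\xi,t}\) implicitly by \(h_{\xi,t}(\erm^{-1/m_{\xi,t}}) = s_0\), so that the rescaled Laplace transforms \(\phi_t(\lambda) \coloneqq \E{\erm^{-\lambda X_{t}/m_{\xi,t}}}\) are pinned at \(\phi_t(1) = s_0\). The local expansion \(h_\xi(1-u) = 1 - \nu_\xi u + o(u)\) inserted into \(h_{\xi,t+1} = h_{\xi,t} \circ h_\xi\) yields \(m_{\xi,t+1}/m_{\xi,t} \to \nu_\xi\) and in particular \(m_{\xi,t}^{1/t} \to \nu_\xi\). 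The family \((\phi_t)\) is automatically tight, and passing to the limit in the recursion \(\phi_{t+1}(\lambda) = h_\xi(\phi_t(\lambda\, m_{\xi,t}/m_{\xi,t+1}))\) produces, for any subsequential limit \(\phi\), the functional equation \(\phi(\nu_\xi \lambda) = h_\xi(\phi(\lambda))\). A monotonicity/uniqueness argument shows that this equation, together with the normalization \(\phi(1) = s_0\) and \(\phi(0) = 1\), has a unique nonincreasing solution, so the full sequence \(\phi_t\) converges and \(X_{t}/m_{\xi,t} \inlaw W_\xi\), where \(W_\xi\) has Laplace transform \(\phi\).

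To finish, the atom at zero is identified via \(\p{W_\xi = 0} = \phi(\infty) = \lim_{t} h_{\xi,t}(0) = 1 - s_\xi\), matching the claim. For the continuity of \(W_\xi\) on \((0,\infty)\), I would translate the functional equation into the distributional identity \(\nu_\xi W_\xi \eql \sum_{i=1}^{\xi} W_\xi^{(i)}\), with \((W_\xi^{(i)})\) iid copies of \(W_\xi\) independent of \(\xi\); an atom of \(W_\xi\) at some \(x > 0\) would, via this self-similar scaling, propagate to a dense set of atoms, contradicting that \(\phi\) represents a probability measure. The main obstacle is precisely this last identification: without any moment beyond the mean, the classical martingale \(X_{t}/\nu_\xi^{t}\) may converge to zero almost surely, so the Seneta normalization cannot be avoided, and showing that \(W_\xi\) is nondegenerate and atomless on \((0,\infty)\) requires a delicate study of the functional equation---which is why the authors invoke the statement directly as Theorem~I.10.3 of~\cite{athreya1972}.
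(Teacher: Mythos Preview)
The paper does not prove this lemma at all: it is listed, together with \autoref{thm:survive}, as ``well-known in the branching process theory'' and attributed to \cite[Theorem~I.10.3]{athreya1972}. So there is no argument in the paper to compare your proposal against; you have correctly noted this yourself in the last sentence.

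As a standalone sketch of the Seneta--Heyde theorem your outline is essentially right: the implicit normalization $h_{\xi,t}(\erm^{-1/m_{\xi,t}})=s_0$, the ratio asymptotic $m_{\xi,t+1}/m_{\xi,t}\to\nu_\xi$, the Abel-type functional equation $\phi(\nu_\xi\lambda)=h_\xi(\phi(\lambda))$, and the identification $\phi(\infty)=\rho_\xi$ are all standard and correct. Two points deserve a caveat. First, the Laplace-transform route you describe yields convergence \emph{in distribution}, whereas Theorem~I.10.3 in Athreya--Ney actually gives almost sure convergence via a martingale (one works with $y_t(s)^{X_t}$ for a suitable sequence $y_t\uparrow 1$); since the lemma does not specify the mode and is not invoked elsewhere in the paper, this gap is harmless here. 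Second, your argument for continuity on $(0,\infty)$ is not quite right as written: the identity $\nu_\xi W_\xi \eql \sum_{i=1}^{\xi} W_\xi^{(i)}$ tells you that $\nu_\xi x$ is an atom of the \emph{sum}, not directly that $W_\xi$ acquires a new atom, so ``propagation to a dense set'' does not follow immediately. The proof in Athreya--Ney proceeds differently (via analytic properties of the limiting transform), and you are right that this is the delicate step.
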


The main result of this section is the following:

\begin{lemma}\label{BEANT}
    Let \((X_t)_{t \ge 0}\) be a branching process with offspring distribution \(\xi\) with
    \(\nu_{\xi} \in (1,\infty)\). Let
    \begin{equation}\label{VXTVB}
        T_{\omega} \coloneqq \inf \{t : X_{t} \ge \omega \}.
    \end{equation}
    Then
    for all \(\varepsilon > 0\) and as \(\omega \to \infty\),
    \begin{equation}\label{BWUZP}
        \pp{T_{\omega} \le (1+\varepsilon) \log_{\nu_{\xi}} \omega}
        \to
        s_{\xi}.
    \end{equation}
\end{lemma}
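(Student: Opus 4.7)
My plan is to sandwich the probability \(\pp{T_\omega \le (1+\varepsilon)\log_{\nu_\xi}\omega}\) between two quantities both tending to \(s_\xi\). The upper bound will come from the survival/extinction dichotomy (\autoref{thm:survive}): only realisations on which the process survives can exceed any prescribed level \(\omega\) once \(\omega\) is large. The lower bound will be extracted from the asymptotic growth rate provided by \autoref{thm:GW:limit}: on survival, \(X_t\) grows like \(W_\xi\, m_{\xi,t}\) with \(m_{\xi,t}^{1/t}\to\nu_\xi\), so the level \(\omega\) is reached comfortably before time \((1+\varepsilon)\log_{\nu_\xi}\omega\).

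For the upper bound, I would simply observe that
\[
\{T_\omega \le (1+\varepsilon)\log_{\nu_\xi}\omega\}\subseteq \{T_\omega<\infty\} = \Bigl\{\sup_{t\ge 0} X_t \ge \omega\Bigr\},
\]
and let \(S\coloneqq \bigcap_{t\ge 1}\{X_t>0\}\) denote the survival event, so \(\pp{S}=s_\xi\) by \autoref{thm:survive}. On the extinction event \(S^c\) the process hits \(0\) in finite time, so \(M\coloneqq \sup_{t\ge 0}X_t\) is almost surely finite on \(S^c\); monotone convergence gives \(\pp{\{M\ge \omega\}\cap S^c}\to 0\), and hence \(\limsup_{\omega\to\infty}\pp{T_\omega<\infty}\le s_\xi\).

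For the lower bound, I would fix \(\delta\in(0,\varepsilon)\), set \(t_\omega\coloneqq \lceil (1+\delta)\log_{\nu_\xi}\omega\rceil\), and show that \(\pp{X_{t_\omega}\ge \omega}\to s_\xi\). By \autoref{thm:GW:limit}, \(m_{\xi,t}^{1/t}\to \nu_\xi\), so
\[
\log_{\nu_\xi} m_{\xi,t_\omega} = (1+o(1))\, t_\omega = (1+\delta+o(1))\log_{\nu_\xi}\omega,
\]
which means \(m_{\xi,t_\omega}/\omega\to\infty\). Since \(X_{t_\omega}/m_{\xi,t_\omega}\to W_\xi\) almost surely, with \(W_\xi>0\) exactly on \(S\), for almost every realisation in \(S\) one has \(X_{t_\omega}\ge \tfrac12 W_\xi\, m_{\xi,t_\omega}\ge \omega\) for all sufficiently large \(\omega\). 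Thus the indicator of \(\{X_{t_\omega}\ge \omega\}\) converges almost surely to the indicator of \(S\), and dominated convergence yields \(\pp{X_{t_\omega}\ge \omega}\to s_\xi\). Because \(t_\omega \le (1+\varepsilon)\log_{\nu_\xi}\omega\) for all large \(\omega\), this gives \(\liminf_{\omega\to\infty}\pp{T_\omega\le (1+\varepsilon)\log_{\nu_\xi}\omega}\ge s_\xi\), completing the sandwich.

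The only point requiring care is aligning the abstract Seneta--Heyde normaliser \(m_{\xi,t}\) of \autoref{thm:GW:limit} with the explicit logarithmic time scale \((1+\varepsilon)\log_{\nu_\xi}\omega\); here the slack \(\varepsilon-\delta>0\) comfortably absorbs the \(o(1)\) error in \(m_{\xi,t}^{1/t}\to\nu_\xi\). Beyond this bookkeeping I do not anticipate a serious obstacle, since both the dichotomy and the a.s.\ growth rate are standard ingredients for the hypothesis \(\nu_\xi\in(1,\infty)\).
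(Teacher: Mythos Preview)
Your proof is correct and, in fact, more self-contained than the paper's. You use only the two branching-process lemmas already stated in the section (the survival dichotomy and the Seneta--Heyde growth \(X_t/m_{\xi,t}\to W_\xi\)), combining them via an almost-sure sandwich on the indicator \(\mathbb{1}[X_{t_\omega}\ge\omega]\). The paper instead works with the complementary event \(\{T_\omega>t_1\}\), splits it according to whether \(X_{t_1}=0\) or \(X_{t_1}\in(0,\omega)\), and then invokes two external results: a quantitative bound on \(\pp{\cap_{i\le t_1}[X_i\in(0,\omega)]}\) from the authors' companion paper, and a theorem of Pakes on the total progeny \(Y_t=\sum_{i\le t}X_i\) conditioned on non-extinction. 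Your route avoids both, at the cost of not producing the exponential rate for the ``lingering'' probability that the paper's decomposition yields as a by-product; since the lemma only asks for convergence, this is no loss here. One small point: you assert \(\{W_\xi>0\}=S\) almost surely, which is slightly stronger than the stated \(\pp{W_\xi=0}=1-s_\xi\); it is of course standard (and implied by the Athreya--Ney reference), but for the lower bound you could equally well just apply Fatou on \(\{W_\xi>0\}\) and let the upper bound close the gap.
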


\begin{proof}
    Let    \(    t_{1} = \floor{(1+\varepsilon) \log_{\nu_\xi}\omega}+1\).
    It suffices to show that
    \(\pp{T_{\omega} > t_{1}} \to q_{\xi} \coloneqq 1 - s_{\xi}\). We split this probability into 
    \begin{equation}\label{CVDKA}
        \pp{T_{\omega} > t_{1}} 
        = 
        \pp{[T_{\omega} > t_{1}] \cap [X_{t_{1}} = 0]}
        +
        \pp{[T_{\omega} > t_{1}] \cap [X_{t_{1}} \in (0,\omega)]}
        \eqqcolon I_{1} + I_{2}.
    \end{equation}

    By Theorem~3.4 of \cite{cai2020a}, there exist constants \(C > 0\) and \(\hnu \in (0,1)\) (both depending only on \(\xi\))
    such that for all \(\varepsilon > 0\),
    \begin{equation}\label{KTMGN}
        I_{2}
        = \pp{\cap_{i=0}^{t_{1}} X_{i} \in (0,\omega)}
        \le C \hnu^{(1+\varepsilon) \log_{\nu_\xi} \omega - (1+o(1)) \log_{\nu_\xi} \omega-1}
        \le C \hnu^{(\varepsilon/2) \log_{\nu_\xi} \omega}
        = o(1)
        .
    \end{equation}

    Let \(Y_{t} = \sum_{i=0}^{t} X_{i}\).
    Let \(E\) denote the event that \((X_{t})_{t \ge 0}\) becomes extinct, i.e., \(X_{t} = 0\) for
    some \(t \in \dsN\).  If \(q_{\xi} = \p{E} = 0\), then \(I_{1} = 0\) and we are done.
    Thus we can assume that \(q_{\xi} > 0\). Then
    \begin{equation}\label{ZFDID}
        I_{1} 
        \le \p{\left[Y_{t_{1}} \le (1+t_{1}) \omega \right] \cap [X_{t_{1}} = 0]}
        \le \p{Y_{t_{1}} \le (1+t_{1}) \omega  \cond E} \pp{E}
        \to \p{E} = q_{\xi}
        ,
    \end{equation}
    since a branching process conditioned on becoming extinct
    has a finite total progeny.

    For a lower bound of \(I_{1}\), note that
    \(Y_{t} < \omega\) implies \(T_{\omega} > t\). Thus,
    \begin{equation}\label{BWKJZ}
        I_{1} \ge  \pp{[Y_{t_{1}} < \omega ] \cap [X_{t_{1}} = 0]}
        =  
        \pp{Y_{t_{1}} < \omega}
        -
        \pp{[Y_{t_{1}} < \omega ] \cap [X_{t_{1}} > 0]}
        .
    \end{equation}
    Note that
    \begin{equation}\label{GQTVR}
        \pp{Y_{t_{1}} < \omega}
        \ge
        \p{Y_{t_{1}} < \omega \cond E} \pp{E}
        \to 
        \pp{E}
        =
        q_{\xi}
        .
    \end{equation}
    By Theorem~6 of \cite{pakes1971}, there exists a sequence \((r_{t})_{t \ge 0}\) with
    \(r_{t}^{1/t} \to \nu_\xi\) such that for all \(x > 0\),
    \begin{equation}\label{BBXBA}
        \p{
            \frac{Y_{t_{1}}}{r_{t_{1}}} < x \cond X_{t_{1}} > 0
        }
        \to
        \p{Z_{\xi} < x \cond Z_{\xi} > 0}
        ,
    \end{equation}
    where \(Z_{\xi}\) is a non-negative random variable for which \(\p{Z_{\xi}=0}=q_{\xi}\) and
    which has continuous distribution on \((0,\infty)\). Therefore, for all \(\delta > 0\),
    \begin{equation}\label{DDJEM}
        \p{Y_{t_{1}}  < \omega\cond X_{t_{1}} > 0}
        \le
        \p{
            \frac{Y_{t_{1}}}{r_{t_{1}}}
            <
            \delta
            \cond
            X_{t_{1}} > 0
        }
        \to
        \p{Z_{\xi} < \delta \cond Z_{\xi} > 0}
        ,
    \end{equation}
    as \(\omega \to \infty\).
    Since \(\delta\) is arbitrary, we have 
    \begin{equation}\label{XMHMW}
        \p{Y_{t_{1}}  < \omega\cond X_{t_{1}} > 0} \to 0.
    \end{equation}
    Putting \eqref{XMHMW} and \eqref{GQTVR} into \eqref{BWKJZ} gives the desired lower bound.
\end{proof}

\autoref{BEANT} can be generalized to multiple iid 
branching processes as follows:

\begin{cor}\label{ORGNW}
    Let \((X_{1,t})_{t \ge 0},\dots, (X_{x,t})_{t \ge 0}\) be \(x \in \dsN\) independent branching processes
    with offspring distribution \(\xi\).  Assume that \(\nu_{\xi} \in (1,\infty)\). Let
    \begin{equation}\label{XYVLM}
        T_{\omega}^{(x)} \coloneqq \inf \Big\{t : \sum_{i=1}^{x} X_{i, t} \ge \omega \Big\}.
    \end{equation}
    Then 
    for all \(\varepsilon > 0\) and as \(\omega \to \infty\),
    \begin{equation}\label{ZUEGE}
        \pp{T_{\omega}^{(x)} \le (1+\varepsilon) \log_{\nu_{\xi}} \omega}
        \to
        1-(1-s_{\xi})^{x}.
    \end{equation}
\end{cor}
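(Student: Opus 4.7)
The plan is to leverage independence of the $x$ branching processes together with \autoref{BEANT} applied to each one individually. I set $t_{1} \coloneqq \floor{(1+\varepsilon)\log_{\nu_{\xi}}\omega}+1$ and, for each $i \in \{1,\dots,x\}$, write $\tau_{i,\omega} \coloneqq \inf\{t : X_{i,t} \ge \omega\}$ for the hitting time of $\omega$ by the $i$-th process alone. The argument then splits into matching upper and lower bounds.

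For the lower bound on $\pp{T_{\omega}^{(x)} \le t_{1}}$, I observe that the event $\{T_{\omega}^{(x)} \le t_{1}\}$ contains each of the independent events $A_{i} \coloneqq \{\tau_{i,\omega} \le t_{1}\}$. Since \autoref{BEANT} gives $\pp{A_{i}} \to s_{\xi}$, independence yields
\[
    \pp{T_{\omega}^{(x)} \le t_{1}} \ge 1 - \prod_{i=1}^{x}\left(1-\pp{A_{i}}\right) \to 1-(1-s_{\xi})^{x}.
\]

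For the upper bound, a pigeonhole argument shows that if $\sum_{i=1}^{x} X_{i,t} \ge \omega$ for some $t\le t_1$, then $X_{i,t} \ge \omega/x$ for some $i$ at that time, so $\{T_{\omega}^{(x)} \le t_{1}\} \subseteq \bigcup_{i=1}^{x} B_{i}$ where $B_{i} \coloneqq \{\tau_{i,\omega/x} \le t_{1}\}$. The main technical step is then to show that $\pp{B_{i}} \to s_{\xi}$. For the lower inequality, I fix any $\varepsilon' \in (0,\varepsilon)$ and note that for $\omega$ sufficiently large one has $t_{1} \ge (1+\varepsilon')\log_{\nu_{\xi}}(\omega/x)$; then \autoref{BEANT} applied with threshold $\omega/x$ and tolerance $\varepsilon'$ gives $\pp{B_{i}} \ge s_{\xi}+o(1)$. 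For the upper inequality, on the extinction event the total progeny is almost surely finite (and on the survival event $X_{i,t}\to\infty$ by \autoref{thm:GW:limit}), so $\pp{\tau_{i,\omega/x} < \infty} \to s_{\xi}$ and in particular $\pp{B_{i}} \le s_{\xi} + o(1)$. Independence then delivers
\[
    \pp{T_{\omega}^{(x)} \le t_{1}} \le 1 - \prod_{i=1}^{x}\left(1-\pp{B_{i}}\right) \to 1-(1-s_{\xi})^{x}.
\]

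The only mildly delicate point is the arithmetic check $t_{1} \ge (1+\varepsilon')\log_{\nu_{\xi}}(\omega/x)$ for $\omega$ large, which is immediate from taking $\varepsilon'<\varepsilon$ to absorb the additive $-(1+\varepsilon')\log_{\nu_\xi} x$ correction. Beyond that, the entire proof reduces to inclusion–exclusion together with independence of the $x$ processes.
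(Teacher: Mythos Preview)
Your proof is correct and follows essentially the same approach as the paper: sandwich $\{T_\omega^{(x)} \le t_1\}$ between $\bigcup_i \{\tau_{i,\omega} \le t_1\}$ and $\bigcup_i \{\tau_{i,\omega/x} \le t_1\}$, then use independence and \autoref{BEANT} on each factor. The paper works with the complementary events $\{T_\omega^{(x)} > t_1\}$ and the corresponding intersections, and is terser about why $\pp{\tau_{i,\omega/x} \le t_1} \to s_\xi$ (simply invoking \autoref{BEANT} at threshold $\omega/x$), whereas you spell out the $\varepsilon'$ adjustment and give a separate argument for the upper half via $\pp{\tau_{i,\omega/x} < \infty} \to s_\xi$; these are cosmetic differences, not a different method.
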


\begin{proof}
    Let \(t_{1}=\floor{(1+\varepsilon) \log_{\nu_\xi}\omega}+1\).
    Let \(T_{i,\omega} = \inf\{t\geq 1:X_{i,t} \ge \omega\}\). By \autoref{BEANT}
    \begin{equation}\label{JIBNE}
        \pp{T_{\omega}^{(x)} > t_{1}}
        \le
        \pp{\cap_{i=1}^{x} [T_{i, \omega} > t_{1}]}
        =
        \prod_{i=1}^{x}
        \pp{T_{i, \omega} > t_{1}}
        \to
        (1-s_{\xi})^{x},
    \end{equation}
    and
    \begin{equation}\label{GHCZX}
        \pp{T_{\omega}^{(x)} > t_{1}}
        \ge
        \pp{\cap_{i=1}^{x} [T_{i, \frac{\omega}{x}} > t_{1}]}
        =
        \prod_{i=1}^{x}
        \pp{T_{i, \frac{\omega}{x}} > t_{1}}
        \to
        (1-s_{\xi})^{x}.\qedhere
    \end{equation}
\end{proof}

\section{Exploring the graph}\label{sec:coupl}

We extend the Breadth First Search (BFS) graph exploration process of \(\vecGn\) defined in \cite{cai2020a}.

For \(\cI \subseteq [n]\), let \(\cE^{\pm}(\cI)\) be the set of heads/tails incident to the nodes in
\(\cI\). Let \(\cE^{\pm}\coloneqq\cE^{\pm}([n])\). For \(\cX\subseteq
\cE^\pm\), let \(\cV(\cX)\) be the set of nodes incident to \(\cX\).
Let \(H\) be a partial pairing of half edges in \(\cE^{\pm}\).  Let \(\cP^{\pm}(H)\subseteq
\cE^{\pm}\) be the set of heads/tails which are paired in \(H\). Let \(\cV(H)=\cV(\cP^\pm(H))\).
Let \(\cF^{\pm}(H)\coloneqq\cE^{\pm}(\cV(H))\setminus\cP^{\pm}(H)\) be the unpaired heads/tails
which are incident to \(\cV(H)\).  Let \(E_{H}\) denote the event that \(H\) is part of \(\vecGn\).
We will explore the graph conditioning on \(E_{H}\).

We start from an arbitrary set \(\cX^+\) of \emph{unpaired} tails.  In this process,
we create random pairings of half-edges one by one and keep each half-edge in exactly one of the four states --- \emph{active, paired}, \emph{fatal} or \emph{undiscovered}. Let \(\cA_i^{\pm }\), \(\cP_i^{\pm}\), \(\cF_{i}^{\pm}\) and  \(\cU_i^{\pm }\) denote
the set of heads/tails in the four states respectively after the \(i\)-th pairing of
half-edges. Initially, let
\begin{equation}\label{eq:APUF}
    \cA_{0}^{+}=\cX^{+},\;
    \cA_{0}^{-}=\cE^{-}(\cV(\cX^+)) ,\;
    \cP_0^{\pm}=\cP^{\pm}(H),\;
    \cF_0^{\pm}=\cF^{\pm}(H),\;
    \cU_{0}^{\pm}=\cE^{\pm} \setminus (\cA^{\pm}_{0} \cup    \cP^{\pm}_{0} \cup \cF_0^{\pm})    .
\end{equation}
Then set \(i=1\) and proceed as follows: \leavevmode
\begin{enumerate}[\normalfont(i)]
    \item Let \(e_{i}^{+}\) be one of the tails which became active earliest in
        \(\cA_{i-1}^{+}\).
    \item Pair \(e^{+}_{i}\) with a head \(e^{-}_{i}\) chosen uniformly at random from \(\cE^-\setminus \cP^-_{i-1}\). Let \(\cP_{i}^{\pm} = \cP_{i-1}^{\pm} \cup \{e^{\pm}_{i}\}\).
    \item If \(e^{-}_{i} \in \cF_{i-1}^{-}\), then terminate; if \(e^{-}_{i} \in \cA_{i-1}^{-}\),
        then \(\cA_{i}^{\pm} = \cA_{i-1}^{\pm}\setminus \{e_i^\pm\}\); and if \(e_i^{-}\in
        \cU^{-}_{i-1}\), then \(\cA_{i}^{\pm} = (\cA_{i-1}^{\pm} \cup \cE^{\pm}(v_{i})) \setminus
        \{e^{\pm}_{i}\}\)  where \(v_{i}=\cV(e^{-}_i)\).
    \item If \(\cA_{i}^{+}\!\!=\emptyset\) terminate; otherwise, \(\cF_{i}^{\pm}\! =\! \cF_{i-1}^{\pm}\), \(\cU_i^{\pm}\! =\!\cE^{\pm}\setminus (\cA^{\pm}_{i} \cup    \cP^{\pm}_{i} \cup \cF_i^{\pm})\), \(i=i+1\) and go to (i).
\end{enumerate}

Let \(F_{\cX^{+}}(0)\) be a forest with \(|\cX^+|\) isolated nodes corresponding
to \(\cX^{+}\).  Given \(F_{\cX^{+}}(i-1)\), \(F_{\cX^{+}}(i)\) is constructed as follows:  if \(e_{i}^{-}
\in \cU_{i-1}^{-}\), then construct \(F_{\cX^{+}}(i)\) from  \(F_{\cX^{+}}(i-1)\) by adding \(\abs{\cE^{+}(v_{i})}\) child nodes to the node representing \(e_{i}^{+}\), each of which representing a
tail in \(\cE^{+}(v_{i})\); otherwise, let \(F_{\cX^{+}}(i)=F_{\cX^{+}}(i-1)\). 
While \(F_{\cX^{+}}(i)\) is an unlabelled forest, its nodes correspond to the tails in \((\cP^{+}_i
\setminus \cP^{+}_{0})\cup\cA^{+}_i\). So we can assign a label \emph{paired} or \emph{active} to each node of \(F_{\cX^{+}}(i)\).

Given half-edges \(e_1\) and \(e_2\), the distance \(\dist(e_1,e_2)\) is the length of the shortest path from \(\cV(e_1)\) to \(\cV(e_2)\) which
starts with the edge containing \(e_{1}\) and ends with the edge containing \(e_{2}\).  

If \(i_t\) is the last step where a tail at
distance \(t\) from \(\cX^{+}\) is paired, then \(F_{\cX^{+}}(i_t)\)
satisfies: (i) the height is \(t\); (ii) the set of actives nodes is the \(t\)-th level.
We call a rooted forest \(F\) \emph{incomplete} if it satisfies (i)-(ii). We let \(p(F)\) be the number of
\emph{paired} nodes in \(F\).

\subsection{Size biased distributions}

We recall some notation in \cite{cai2020a}. The \emph{in- and out-size biased} distributions of \(D_{n}\) and \(D\) are defined
\begin{align}
    \p{\dnin=(k-1,\ell)}&=\frac{k n_{k,\ell}}{m_{n}},
    \qquad
    \p{\dnout=(k,\ell-1)}=\frac{\ell n_{k,\ell}}{m_{n}},
\\
\p{{D}_{\tin}=(k-1,\ell)} &= \frac{k \lambda_{k,\ell}}{\lambda}
,
\qquad
\p{{D}_{\tout}=(k,\ell-1)} = \frac{\ell \lambda_{k,\ell}}{\lambda}
.
\end{align}
Then, by (i) of \autoref{cond:main}, 
\(\dnin \to \din \) and \(\dnout \to \dout\),
and by (iii) of \autoref{cond:main},
\begin{equation}\label{eq:dnin:nu}
    \lim_{n \to \infty}
    \E{
        \dnin^{+}
    }
    =
    \lim_{n \to \infty}
    \E{
        \dnout^{-}
    }
    =
    \E{\dinp}
    =
    \E{\doutm}
    =    
    \frac{\mathbb{E}\left[D^{+} D^{-}\right]}
    \lambda
    =
    \nu
    .
\end{equation}

Let \(s_{n+}\), \(s_{n-}\), \(s_{+}\) and \(s_{-}\) be the survival probabilities of the branching
processes with distribution \(\dninp\), \(\dnoutm\), \(D_\tin^{+}\) and \(D_\tout^{-}\) respectively.
Then as we have shown in \cite{cai2020a}, \(s_{n\pm} \to s_{\pm}\).

\subsection{Coupling with branching processes}
Consider the probability distribution \(Q_n\coloneqq(D_n)_\tin^{+}\) which satisfies for all \(\ell\geq 0\),
\begin{equation}
    \p{Q_n=\ell} = q_{n,\ell} \coloneqq \frac{\sum_{k\geq 1}k n_{k,\ell}}{m_n}.
\end{equation}
In \cite[Section~3]{cai2020a}, it has been shown that \(Q_{n} \to \dinp\) in distribution
and in expectation. In particular, by \eqref{eq:dnin:nu} \(\bE[Q_n] \to \bE[\dinp] = \nu\). Also in
\cite{cai2020a}, we showed that the exploration process starting from one tail can be approximated by a branching process with offspring distribution \(Q_{n}\). Similarly, the extended exploration process starting from \(\cX^{+}\) can be approximated by \(\abs{\cX^{+}}\) independent
branching processes with offspring distribution \(Q_{n}\).

For \(\beta\in(0,1/10)\), consider the distributions  \(Q_n^\da=Q_n^\da (\beta)\)  and  \(\Qnua = \Qnua(\beta)\) defined
by
\begin{align}
    \p{\Qnda = \ell}
    &=
    q_{n,\ell}^\da \coloneqq
    \begin{cases}
        c^{\da} q_{n,\ell} & \text{if } q_{n,\ell} \geq n^{-2\beta} \text{and }\ell\leq
        n^{\beta}
        \\
        0 & \text{otherwise}
    \end{cases}
\\
    \p{\Qnua = \ell}
    &=
    q_{n,\ell}^\ua \coloneqq
    \begin{cases}
        c^{\ua} q_{n,\ell} & \ell \ge 1
        \\
        c^{\ua} q_{n,0}+n^{-1/2+2\beta} & \ell = 0
    \end{cases}
\end{align}
where \(c^{\da}\) and \(c^{\ua}\) are normalising constants.

Let \(\GW_{\xi}^{(x)} = (\GW_{1,\xi}, \dots, \GW_{x,\xi})\) be \(x\) independent Galton-Watson trees
with offspring distribution \(\xi\).  Let \(F=(T_{1},\dots, T_{x})\) be an incomplete forest.
Let \(\GW_{\xi}^{(x)} \cong F\) denote that for every \(i \in [x]\),
\(T_{i}\) is a root subtree of \(\GW_{i, \xi}\) and all paired nodes of \(T_{i}\) have the same
degree in \(\GW_{i,\xi}\).

The following lemma is a straightforward extension of \cite[Lemma 5.3]{cai2020a} and we omit its proof:
\begin{lemma}\label{lem:eq_T_GW}
    Let \(\beta \in (0,1/10)\) and let \(H\) be a partial pairing with \(|\cV(H)|\le n^{1- 6\beta}\). Let \(\cX^+\subset \cE^+\) with \(|\cX^+|=x\).
    For every incomplete forest \(F\) with
    \(p(F)\leq n^{\beta}\), we have
    \begin{equation}
        (1+o(1)) \p{\GW_{Q_n^\da{(\beta)}}^{(x)}\cong F} 
        \leq \p{F_{\cX^{+}}(p(F)) = F \cond E_{H}} 
        \leq  (1+o(1)) \p{\GW_{Q_n^{\ua}(\beta)}^{(x)}\cong F}
        .
    \end{equation}
\end{lemma}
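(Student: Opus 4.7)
The plan is to run the argument of Lemma~5.3 of~\cite{cai2020a}, which handles the case \(x=1\), essentially verbatim while tracking where the choice of a single initial tail was used. The core idea is to express \(\p{F_{\cX^{+}}(p(F)) = F \cond E_{H}}\) as a product of \(p(F)\) one-step conditional probabilities, one for each pairing that realises~\(F\), and match this product step-by-step against the corresponding product formula
\[
\p{\GW^{(x)}_{\Qnda} \cong F} = \prod_{i=1}^{x} \p{\GW_{\Qnda} \cong T_{i}}
\]
(and similarly for \(\Qnua\)) that arises from the mutual independence of the \(x\) Galton--Watson trees.

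At step~\(i\), the tail \(e_i^{+}\) is determined by the BFS order read off from~\(F\), and \(e_i^{-}\) is uniform over the \(m_n - (i-1) - |\cP^{-}(H)|\) unpaired heads. Requiring \(e_i^{-}\) to fall on one of the \(k\) heads of a yet-undiscovered node of out-degree \(\ell_i\), where \((k,\ell_i)\) is read off from the child being created, gives a one-step probability of the form
\[
\frac{\sum_{k \ge 1} k \bigl(n_{k,\ell_i} - a_{k,\ell_i}^{(i)}\bigr)}{m_n - (i-1) - |\cP^{-}(H)|},
\]
where \(a_{k,\ell_i}^{(i)}\) counts the type-\((k,\ell_i)\) nodes already visited or incident to~\(H\). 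Under the hypotheses \(p(F) \le n^{\beta}\), \(|\cV(H)| \le n^{1-6\beta}\) and \(\Delta_n = o(\sqrt{n})\), both the numerator correction and the denominator shift are \(o(m_n)\), so the one-step probability equals \((1+o(1))\, q_{n,\ell_i}\), uniformly in \(\ell_i\) and in the step index.

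For the lower bound via \(\Qnda\), the truncations \(\ell \le n^{\beta}\) and \(q_{n,\ell} \ge n^{-2\beta}\) together discard mass \(o(1)\), so \(c^{\downarrow} = 1+o(1)\) and the one-step probability is at least \((1+o(1))\, q_{n,\ell_i}^{\downarrow}\); the cutoff \(\ell_i \le n^{\beta}\) is never active since every paired node of~\(F\) has at most \(p(F) \le n^{\beta}\) children. For the upper bound via \(\Qnua\) I must additionally account for the event that \(e_i^{-}\) lands on an \emph{active} head, which adds no children and hence matches only an \(\ell = 0\) slot; this contributes an extra probability at most \(|\cA^{-}_{i-1}|/m_n \le p(F)\,\Delta_n/m_n = o(n^{-1/2+2\beta})\), which is exactly the extra mass placed at zero in the definition of \(q^{\uparrow}_{n,0}\). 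Multiplying these bounds over the \(p(F) \le n^{\beta}\) steps preserves the cumulative \((1+o(1))\) factor, and the factorisation across the \(x\) root subtrees follows because each one-step probability depends on the past only through those uniform \(o(1)\) corrections. The main subtlety relative to the \(x=1\) case, and the reason this is only a ``straightforward'' extension, is checking that this factorisation survives the resource sharing among the \(x\) parallel sub-explorations: it does, because the joint occupation \(|\cV(H)| + x + p(F)\) stays within the regime where all the step-wise estimates above are valid.
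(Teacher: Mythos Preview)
Your proposal is correct and matches the paper's approach: the paper omits the proof entirely, stating only that the lemma is a ``straightforward extension of \cite[Lemma~5.3]{cai2020a}'', and what you have sketched is precisely that extension---running the single-root step-by-step comparison of Lemma~5.3 while tracking that the extra occupation contributed by the \(x-1\) additional roots (a bounded constant) is absorbed into the same \(o(1)\) corrections. Your identification of the key bookkeeping point (that the joint occupation \(|\cV(H)|+x+p(F)\) stays in the regime where all one-step estimates are uniform) is exactly the content of ``straightforward''.
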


\section{Expansion probability}\label{sec:tower}

Let \(\cN^\pm_{t}(\cX^{\pm})\) and \(\cN^\pm_{\le t}(\cX^{\pm})\) be the sets of heads/tails at
distance \(t\) and at most \(t\) from \(\cX^{\pm} \subseteq \cE^{\pm}\) respectively.  From now on,
let
\begin{equation}
    \omega\coloneqq \log^6 n, \qquad 
    t_{0} \coloneqq \log_{\nu} \omega
    .
\end{equation}
Let \(t_{\omega}(\cX^{\pm})\) be the \emph{expansion time} of \(\cX^\pm\) defined as
\begin{equation}
    t_{\omega}(\cX^{\pm})
    \coloneqq
    \inf
    \left\{
        t \ge 1:
        \abs{
            \cN_{t}^\pm(\cX^{\pm})
        }
        \geq \omega
    \right\}
    .
\end{equation}
For brevity, we write \(\NtoOXpm = \cup_{t = 1}^{t_{\omega}} \cN^\pm_{t}(\cX^{\pm})\). 

 Given \(H\) a partial pairing of \(\cE^\pm\) and  \(\cX^{\pm}\subseteq \cE^\pm \), we consider the following two events:
\begin{equation}\label{NVMZA}
    \begin{aligned}
        &
        A_{1}(\cX^{\pm},\varepsilon) \coloneqq [t_{\omega}(\cX^{\pm}) \le (1+\varepsilon)t_{0}].
        \\
        &
        A_{2}(\cX^{\pm}, H) \coloneqq
        \left[ 
            \NtoOXpm
            \cap
            \cF^{\pm}(H)
            =
            \emptyset
        \right]
        .
    \end{aligned}
\end{equation}
The first lemma in this section shows that the probability that both these events happen is close to
the survival probability of a branching process.

\begin{lemma}\label{lem:expand:single}
    Assume that \(\nu > 1\).  Fix \(x \in \N\), \(\varepsilon \in (0,1/2)\) and \(\gamma \in
    (0,1)\).  Then uniformly for all choices of partial pairing \(H\) and \(\cX^{\pm}\subseteq \cE^\pm\) with
    \(\abs{\cV(H)} \le n^{1-\gamma}\), \(\abs{\cX^{\pm}} = x\), as \(n \to \infty\),
    \begin{equation}\label{eq:tower:p}
        \p{
            A_{1}(\cX^{\pm}, \varepsilon)
            \cap
            A_{2}(\cX^{\pm}, H)
            \cond
            E_{H}
        }
        =
        (1+o(1))
        (1-\rho_{\pm}^{x})
        .
    \end{equation}
\end{lemma}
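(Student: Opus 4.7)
The plan is to transfer the statement from the BFS process in $\vecGn$ to a collection of iid branching processes, where Corollary~\ref{ORGNW} directly computes the expansion probability $1-(1-s_\pm)^x = 1-\rho_\pm^x$, and then to show that the event $A_2$ (avoiding the forbidden set $\cF^\pm(H)$) holds with asymptotic probability $1$.

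First, pick $\beta \in (0, \min(1/10, \gamma/6))$, so that $n^{1-\gamma} \le n^{1-6\beta}$ and the hypothesis of Lemma~\ref{lem:eq_T_GW} is satisfied. By that lemma, the BFS forest $F_{\cX^\pm}(p)$ is sandwiched up to a $(1+o(1))$ factor between the forests of $\GW^{(x)}_{\Qnda(\beta)}$ and $\GW^{(x)}_{\Qnua(\beta)}$ as long as $p \le n^\beta$. The distributions $\Qnda$ and $\Qnua$ both converge to $\dinp$ (respectively $\doutm$ when starting from $\cX^-$) in distribution and expectation, so their means tend to $\nu > 1$, and as noted in the paper the corresponding survival probabilities satisfy $s_{\Qnda}, s_{\Qnua} \to s_\pm$.

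Next, apply Corollary~\ref{ORGNW} with offspring distribution $\Qnda(\beta)$ (respectively $\Qnua(\beta)$) and threshold $\omega = \log^6 n$. Since the number of paired nodes accumulated before the $x$ coupled trees reach total size $\omega$ is $O(\omega) = O(\log^6 n) \ll n^\beta$, the sandwich of Lemma~\ref{lem:eq_T_GW} is valid throughout the exploration. Corollary~\ref{ORGNW} then gives, for each of the two coupled processes, that the probability of reaching size $\omega$ within time $(1+\varepsilon)\log_{\nu_\xi}\omega$ tends to $1-(1-s_\pm)^x$. Because the upper and lower bounds coincide in the limit, we conclude
\[
\pp{A_1(\cX^\pm,\varepsilon)\cond E_H} = (1+o(1))(1-\rho_\pm^x).
\]

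It remains to show $\pp{A_1 \cap \lnot A_2 \cond E_H} = o(1)$. On $A_1$, the BFS performs at most $\sum_{t\le (1+\varepsilon)t_0}|\cN_t^\pm| = O(\omega)$ pairings; each matches a tail to a head drawn uniformly from $\cE^- \setminus \cP^-_{i-1}$, so the probability of landing in $\cF^\pm(H)$ is at most $|\cF^\pm(H)|/(m_n - |\cP^\pm(H)| - O(\omega))$. Since $|\cF^\pm(H)| \le \Delta_n |\cV(H)|$, the assumption $|\cV(H)| \le n^{1-\gamma}$ combined with $\Delta_n = o(\sqrt{n})$ (Corollary 2.4 of \cite{cai2020a}) and $m_n = \Theta(n)$ gives a union bound of $o(1)$ on this event. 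Subtracting this negligible contribution yields the claimed identity.

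The main obstacle is the joint handling of $A_1$ and $A_2$: the coupling of Lemma~\ref{lem:eq_T_GW} describes an unrestricted exploration, whereas in the graph the BFS terminates as soon as it encounters $\cF^\pm(H)$. One must argue, via the $O(\omega)$ bound on the number of pairings performed until level $t_\omega$ is reached (itself justified by the branching-process mean-$\nu$ estimate), that the restricted and unrestricted processes agree with probability $1-o(1)$ on the event $A_1$. Once this is in place, the branching-process limits from Section~\ref{sec:branching} do all the remaining work.
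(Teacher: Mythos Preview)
Your strategy works in spirit, but the execution has both a conceptual inversion and a technical gap.

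Conceptually, you invert the natural order. The sandwich of Lemma~\ref{lem:eq_T_GW} bounds $\p{F_{\cX^{+}}(p(F)) = F \mid E_H}$ for each \emph{incomplete} forest $F$, and the event $[F_{\cX^{+}}(p(F)) = F]$ already forces the exploration not to have terminated (the last level of $F$ consists of active nodes), i.e.\ no fatal half-edge was hit up to that point. Hence summing these probabilities over forests of height $t\le (1+\varepsilon)t_0$ whose last level is the first to reach size $\omega$ yields $\pp{A_1 \cap A_2 \mid E_H}$ \emph{directly}; this is precisely the paper's argument, and no separate estimate of $A_2^c$ is required. In other words, your ``Step~1'' does not compute $\pp{A_1\mid E_H}$ as you claim: it already computes the target $\pp{A_1\cap A_2\mid E_H}$, so the subtraction in Step~3 is superfluous.

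Technically, your union bound for $\pp{A_1 \cap A_2^c\mid E_H}=o(1)$ fails as written. You use $|\cF^{\pm}(H)|\le \Delta_n\,|\cV(H)| = o(n^{1/2})\cdot n^{1-\gamma} = o(n^{3/2-\gamma})$, so each pairing lands in $\cF^{-}(H)$ with probability only bounded by $o(n^{1/2-\gamma})$; over $O(\omega t_0)$ steps the bound diverges whenever $\gamma\le 1/2$, whereas the lemma must hold for every $\gamma\in(0,1)$. (This can be repaired via Cauchy--Schwarz and Condition~\ref{cond:main}(iii), which give $|\cF^{\pm}(H)|=O(n^{1-\gamma/2})$; but since the coupling already delivers $\pp{A_1\cap A_2\mid E_H}$, the detour is unnecessary.) A smaller slip: the number of pairings performed on $A_1$ is $O(\omega t_0)$, not $O(\omega)$, since each of the $O(t_0)$ levels below $t_\omega$ may contain up to $\omega$ tails.
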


\begin{proof}
    Let \(\cF_{x,t,\omega}\) be the class of incomplete forests \(F\) with \(x\) trees, height \(t\) and
    such that only the last level has at least \(\omega\) nodes.  Let \(t_{1} =
    \floor{(1+\varepsilon) t_{0}}\).  For \(t \le t_{1}\) and \(F \in \cF_{x,t, \omega}\), we have
    \((t-1)\leq p(F) \le x \omega t= O(\log^{7} n)\).  Let \(\beta=\gamma/100\).  Let
    \(X_{1,t}^{{\ua}},\dots,X_{x,t}^{{\ua}}\) be the sizes of the \(t\)-th generation of \(x\) iid
    branching processes with offspring distribution \(Q_n^{\ua}(\beta)\) and let \(s_{+n}^{\ua}\) be
    the survival probability of each one. Since \(Q_{n}^{\ua} \to D^+_\tin\) in distribution, we
    have \(s_{+n}^{\ua} \to s_{+} = 1-\rho_{+} > 0\).

    Let \(T_{\omega}^{\ua}= \inf\{t\geq 1:\,\sum_{i=1}^{x} X_{i,t}^{\ua} \ge \omega\}\).  By
    \autoref{ORGNW} and \autoref{lem:eq_T_GW}, the
    LHS of \eqref{eq:tower:p} is
    \begin{equation}\label{OAAQU}
        \begin{aligned}
            \sum_{t = 1}^{t_{1}}
            \sum_{j=t-1}^{\floor{x \omega t}}
            \sum_{\substack{F\in \cF_{x,t,\omega}\\ p(F)=j}} 
            \p{
                F_{\cX^{+}}(x) = F
                \cond
                E_{H}
            }
            & \leq   
            (1+o(1))
            \sum_{t = 1}^{t_{1}}
            \sum_{j=t-1}^{\floor{x \omega t}}
            \sum_{\substack{F\in \cF_{x,t,\omega}\\ p(F)=j}} \p{\GW_{Q_n^{\ua}(\beta)}\cong F}
            \\
            &
            =
            (1+o(1))
            \p{T_{\omega}^{\ua} \le t_{1}}
            \\
            &
            =
            (1+o(1))
            (1-(1-s_{+n}^{\ua})^{x})
            \\
            &
            =
            (1+o(1))
            (1-\rho_{+}^{x})
            ,
        \end{aligned}
    \end{equation}
    where we used that \(\nu > 0\) implies \(\rho_{\pm}<1\). The lower bound follows from a similar
    argument.
\end{proof}

Our next lemma shows that when 
\(\abs{\cX^{+}} \abs{\cX^{-}}\) is small, \(\cX^{+}\) and \(\cX^{-}\) are unlikely to be too close.
We omit the proof since it follows from an easy adaptation of the proof in
\cite[Proposition~7.2]{cai2020a}.

\begin{lemma}\label{lem:dist:1}
    Assume that \(\nu > 1\).
    Fix \(\varepsilon \in (0,1/2)\) and \(\gamma \in (0,1)\).  Then uniformly for all choices of partial pairing \(H\) and \(\cX^{\pm}\subseteq \cE^\pm\)
    with \(\abs{\cV(H)} \le n^{1-\gamma}\) and
    \(\abs{\cX^{+}}\abs{\cX^{-}} \le \omega \sqrt{n}\), we have
    \begin{equation}\label{DQGPL}
        \p{
            \dist(\cX^{+}, \cX^{-}) \le \left(\frac{1}{2}-\varepsilon \right) \log_{\nu} n
            \cond
            E_{H}
        }
        =
        o(n^{-\varepsilon/2}).
    \end{equation}
\end{lemma}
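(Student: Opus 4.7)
The plan is a direct first-moment estimate over short directed walks between $\cX^+$ and $\cX^-$, following the strategy of \cite[Proposition~7.2]{cai2020a}. Set $T \coloneqq \floor{(1/2-\varepsilon)\log_\nu n}$, and for $\ell \in [1,T]$ let $P_\ell$ be the number of directed walks of length exactly $\ell$ in $\vecGn$ whose first edge has tail in $\cX^+$ and whose last edge has head in $\cX^-$. Since any directed path witnessing $\dist(\cX^+,\cX^-)\le T$ is in particular such a walk, the event $[\dist(\cX^+,\cX^-)\le T]$ is contained in $\{\sum_{\ell=1}^T P_\ell \ge 1\}$, so Markov gives
\begin{equation*}
\p{\dist(\cX^+,\cX^-)\le T \cond E_H} \le \sum_{\ell=1}^T \E{P_\ell \cond E_H}.
\end{equation*}

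For each $\ell$ I would estimate $\E{P_\ell \cond E_H}$ by summing over half-edge tuples $(e_0^+,e_1^-,e_1^+,\dots,e_{\ell-1}^-,e_{\ell-1}^+,e_\ell^-)$ with $e_0^+ \in \cX^+$, $e_\ell^- \in \cX^-$ and $(e_i^-,e_i^+)$ incident to a common node for $i=1,\dots,\ell-1$. The number of such tuples is at most $|\cX^+||\cX^-|\bigl(\sum_v d_v^- d_v^+\bigr)^{\ell-1}$; conditional on $E_H$, the probability that the required $\ell$ pairings are jointly realized in the residual uniform matching is exactly $1/(m-|\cP^-(H)|)^{(\ell)}$, which is $(1+o(1))/m^\ell$ uniformly over $\ell = O(\log n)$ using $|\cV(H)|\le n^{1-\gamma}$ together with $\Delta_n=o(\sqrt n)$ from \cite[Corollary~2.4]{cai2020a}. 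Since $\sum_v d_v^- d_v^+/m \to \nu$ by \autoref{cond:main}(iii), this yields
\begin{equation*}
\E{P_\ell \cond E_H} \le (1+o(1))\,\frac{|\cX^+||\cX^-|\,\nu^{\ell-1}}{m}.
\end{equation*}

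Summing the geometric series in $\ell$ (dominated by its largest term at $\ell = T$) and plugging in $|\cX^+||\cX^-|\le \omega\sqrt n$, $\nu^T=n^{1/2-\varepsilon}$, and $m \sim \lambda n$, I obtain
\begin{equation*}
\sum_{\ell=1}^T \E{P_\ell \cond E_H} = O\!\left(\frac{|\cX^+||\cX^-|\,\nu^T}{m}\right) = O\!\left(\frac{\omega\sqrt n\cdot n^{1/2-\varepsilon}}{n}\right) = O(\omega\,n^{-\varepsilon}) = o(n^{-\varepsilon/2}),
\end{equation*}
since $\omega=\log^6 n = n^{o(1)}$. The main technical obstacle is the careful accounting of the conditioning on $E_H$: half-edges in $\cP^\pm(H)$ are already matched and must be excluded from both the tuple count and the matching probability, producing correction factors $(1+O(|H|/m))^\ell$ that are $1+o(1)$ provided $|H|=o(m)$, which one has to verify in the parameter range where the lemma is applied. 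A natural alternative is to run the BFS from $\cX^+$ for $T$ levels and couple with $|\cX^+|$ iid branching processes via \autoref{lem:eq_T_GW}; but that coupling requires the explored population to stay below $n^\beta$, a bound that can easily fail when $|\cX^+|$ is close to $\omega\sqrt n$, so the direct path-counting is the cleaner route.
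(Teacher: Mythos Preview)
Your approach is exactly the one the paper intends: it omits the proof and cites \cite[Proposition~7.2]{cai2020a}, which is this first-moment count over directed paths, so the proposal matches. One small correction: the bound $|\cP^{\pm}(H)|\le |\cV(H)|\,\Delta_n = n^{1-\gamma}\cdot o(\sqrt n)$ does not give $o(m)$ when $\gamma\le 1/2$; use instead the second-moment bound $|\cE^{\pm}(\cV(H))|\le \bigl(|\cV(H)|\sum_v (d_v^{\pm})^2\bigr)^{1/2}=O(n^{1-\gamma/2})$ (this is \cite[Lemma~2.2]{cai2020a}, invoked elsewhere in the paper), which yields $|\cP^{\pm}(H)|\cdot T/m = O(n^{-\gamma/2}\log n)=o(1)$ and makes the $(1+o(1))$ correction uniform over $\ell\le T$ for the full range $\gamma\in(0,1)$.
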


The previous lemma allows us to remove \(A_{2}(\cX^{\pm}, H)\) in \autoref{lem:expand:single}.
\begin{lemma}\label{lem:expand:double}
    Assume that \(\nu > 1\).  Fix \(x^{\pm} \in \N\) and \(\varepsilon \in (0, 1/2) \).  Then
    uniformly for all choices of partial pairing \(H\) and \(\cX^{\pm}\subseteq \cE^\pm\) with
    \(\abs{\cV(H)} = o(\omega^2)\), \(\abs{\cX^{\pm}}=x^{\pm}\), we have, as \(n \to \infty\),
    \begin{align}\label{QPGVW}
        \p{
            A_{1}(
                \cX^{\pm}
                ,
                \varepsilon
            )
            \cond
            E_{H}
        }
        &=
        (1+o(1))
        (1-\rho_{\pm}^{x^{\pm}})
        ,
    \\
    \label{AWRMK}
        \p{
            A_{1}(
                \cX^{+}
                ,
                \varepsilon
            )
            \cap
            A_{1}(
                \cX^{-}
                ,
                \varepsilon
            )
            \cond
            E_{H}
        }
        &=
        (1+o(1))
        (1-\rho_{-}^{x^{-}})
        (1-\rho_{+}^{x^{+}})
        .
    \end{align}
\end{lemma}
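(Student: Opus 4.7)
The plan is to bootstrap from \autoref{lem:expand:single} in two stages: first drop the auxiliary event \(A_{2}\) to obtain \eqref{QPGVW}, then sequentially expose the BFS from \(\cX^{+}\) followed by the BFS from \(\cX^{-}\) to deduce \eqref{AWRMK}.

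For \eqref{QPGVW}, the lower bound is immediate from \autoref{lem:expand:single}, since \(A_{1} \supseteq A_{1} \cap A_{2}\) and \(|\cV(H)| = o(\omega^{2}) \le n^{1-\gamma}\) for any small \(\gamma > 0\). For the matching upper bound, I would show that \(\p{A_{1}(\cX^{\pm},\varepsilon) \cap \overline{A_{2}}(\cX^{\pm},H) \cond E_{H}} = o(1)\) by a simple union bound. On \(A_{1}\) the BFS makes at most \(T = O(\omega t_{0}) = O(\log^{7} n)\) pairings up to and including the expansion step, and at each pairing the conditional probability of selecting a half-edge of \(\cF^{\pm}(H)\) is at most \(|\cF^{\pm}(H)|/\Theta(n)\). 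Since \(|\cF^{\pm}(H)| \le |\cV(H)|\Delta_{n} = o(\omega^{2}\sqrt n)\), the resulting total bound is \(O(\log^{19} n/\sqrt n) = o(1)\).

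For \eqref{AWRMK}, I would expose the BFS from \(\cX^{+}\) first and let \(H_{1} \supseteq H\) denote the resulting partial pairing (truncated at expansion or at a fatal collision). On \(A_{1}(\cX^{+},\varepsilon)\), this BFS performs at most \(T\) pairings, so \(|\cV(H_{1})| \le |\cV(H)| + T = o(\omega^{2})\). Letting \(G\) be the event that no head of \(\cX^{-}\) is paired during this first BFS, a further union bound gives \(\p{\overline{G} \cond E_{H}} \le T x^{-}/\Theta(n) = o(1)\). On \(A_{1}(\cX^{+},\varepsilon) \cap G\) the set \(\cX^{-}\) remains a set of unpaired heads in \(H_{1}\), so applying \eqref{QPGVW} with \(H\) replaced by \(H_{1}\) gives
\[
\p{A_{1}(\cX^{-},\varepsilon) \cond E_{H_{1}}} = (1+o(1))(1-\rho_{-}^{x^{-}}).
\]
Averaging over \(H_{1}\) and using \eqref{QPGVW} to evaluate \(\p{A_{1}(\cX^{+},\varepsilon) \cap G \cond E_{H}} = (1+o(1))(1-\rho_{+}^{x^{+}})\) then yields \((1+o(1))(1-\rho_{-}^{x^{-}})(1-\rho_{+}^{x^{+}})\); the \(o(1)\) contribution of \(\overline{G}\) is absorbed into the leading \((1+o(1))\) factor since both survival probabilities are \(\Theta(1)\) when \(\nu > 1\).

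The main obstacle is ensuring that \eqref{QPGVW} is genuinely applicable to the random pairing \(H_{1}\) produced by the first BFS; this requires uniformity of the estimate over all \(H\) with \(|\cV(H)| = o(\omega^{2})\), which is exactly how \autoref{lem:expand:single} and consequently \eqref{QPGVW} are phrased. Interestingly, \autoref{lem:dist:1} is not needed at this stage: the sequential exposure does not require the two BFS explorations to be graph-theoretically disjoint, only the weaker property that the first BFS does not accidentally consume a head of \(\cX^{-}\).
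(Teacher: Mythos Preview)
Your proposal is correct, and for \eqref{AWRMK} it is essentially the paper's argument: expose the forward BFS from \(\cX^{+}\), record the resulting partial pairing \(H_{1}\supseteq H\) with \(|\cV(H_{1})|=o(\omega^{2})\), and then apply \eqref{QPGVW} (in the \(-\) direction) conditionally on \(E_{H_{1}}\). Your explicit introduction of the event \(G\) that no head of \(\cX^{-}\) is consumed in the first exploration is a detail the paper leaves implicit but which is indeed needed for the conditional application to be legitimate.

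The genuine difference is in how you establish \eqref{QPGVW}. The paper removes the event \(A_{2}\) by invoking \autoref{lem:dist:1}: since \(|\cV(H)|=o(\omega^{2})\), an appeal to \cite[Lemma~2.2]{cai2020a} gives \(|\cE^{\pm}(\cV(H))|=o(\omega\sqrt{n})\), and then \autoref{lem:dist:1} with \(\cX^{-}=\cF^{-}(H)\) (or the symmetric choice) shows
\[
\p{E_{1}\cap E_{2}^{c}\cond E_{H}}\le \p{\dist(\cX^{+},\cF^{-}(H))\le 4t_{0}\cond E_{H}}=o(1).
\]
You instead bound \(\p{E_{1}\cap E_{2}^{c}\cond E_{H}}\) directly by a union bound over the at most \(T=O(\omega t_{0})\) pairings made before expansion, using the crude estimate \(|\cF^{\pm}(H)|\le |\cV(H)|\Delta_{n}=o(\omega^{2}\sqrt{n})\). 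This is more elementary, avoids both \autoref{lem:dist:1} and the second-moment bound from \cite{cai2020a}, and still gives \(o(n^{-1/2+o(1)})\), which is more than enough here. The paper's route, on the other hand, plugs into machinery already set up for the diameter analysis and yields a somewhat sharper \(o(n^{-\varepsilon/2})\) bound; that extra strength is not used in the present lemma. Your closing remark that \autoref{lem:dist:1} is not needed for this lemma is therefore accurate.
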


\begin{proof}
We will prove it for \(\cX^+\); a similar argument works for \(\cX^{-}\).   Let
    \begin{equation}\label{SSPYY}
        E_{1} = A_{1}(\cX^{+} , \varepsilon),
        \quad
        E_{2} 
        =
        A_{2}(\cX^{+}, H),
        \quad
        E_{3} = A_{1}(\cX^{-} , \varepsilon).
    \end{equation}
    Note that the event \(E_{2}\) happens if and only if \(
        \dist(\cX^{+}, \cF^{+}(H)) > t_{\omega}(\cX^{+})\).

    By \autoref{lem:expand:single}, the LHS of \eqref{QPGVW} equals
    \begin{equation}\label{QLQWX}
        \begin{aligned}
            \p{E_{1} \cond E_{H}} 
            &
            = 
            \p{E_{1} \cap E_{2} \cond E_{H}}
            +
            \p{E_{1} \cap E_{2}^{c} \cond E_{H}}
            \\
            &
            =
            (1+o(1)) (1-\rho_{+}^{x^{+}})
            +
            \p{E_{1} \cap E_{2}^{c} \cond E_{H}}
            .
        \end{aligned}
    \end{equation}
    Since \(\abs{\cV(H)} = o(\omega^{2})\), by \cite[Lemma 2.2]{cai2020a} we have
    \(\abs{\cE^{+}(H)} = o(\omega \sqrt{n})\).
    By \autoref{lem:dist:1}, for \(\delta <1/2\),
    \begin{equation}\label{NRAIV}
        \begin{aligned}
        \p{E_{1} \cap E_{2}^{c} \cond E_{H}}
        &
        \le
        \p{\dist(\cX^{+}, \cF^{+}(H)) \le 4 t_{0} \cond E_{H}}
        \\
        &
        \le
        \p{\dist(\cX^{+}, \cF^{+}(H)) \le \left( \frac{1}{2} - \delta \right) \log n \cond E_{H}}
        =
        o(1)
        .
        \end{aligned}
    \end{equation}

    Let \(\cH\) be the set of all possible partial pairings in \(\cN^{\leq \omega}(\cX^{+})\) such that
    \(E_{1} \cap E_{H}\) happens. Then \(H' \in \cH\) implies that \(\abs{\cV(H')},\abs{\cV(H' \cup H)} = o(\omega^{2})\). Using \autoref{lem:expand:single} again, we have
        \begin{align}\label{XOZRK}
            \p{E_{1} \cap E_{3} \cond E_{H}}
            &
            =
            \sum_{H' \in \cH}
            \p{E_{3} \cond E_{H \cup H'}}
            \p{E_{H \cup H'} \cond E_{H}}
            \nonumber
            \\
            &
            =
            \sum_{H' \in \cH}
            (1+o(1)) (1-\rho_{-}^{x^{-}}) 
            \p{E_{H' \cup H} \cond E_{H}}
            \\
            &
            =
            (1+o(1)) (1-\rho_{-}^{x^{-}}) 
            \p{E_{1}\cond E_{H}}
            \nonumber
            \\
            &
            =
            (1+o(1)) 
            (1-\rho_{-}^{x^{-}}) 
            (1-\rho_{+}^{x^{+}}) 
            .
            \qedhere
        \end{align}      
\end{proof}

Unsurprisingly, \autoref{lem:expand:double} can be extended to a fixed number of pairs of head-sets
and tail-sets:

\begin{lemma}\label{lem:expand:4}
    Assume that \(\nu > 1\).  Fix \(i, x_{1}^{\pm}, \dots, x_{i}^{\pm} \in \N\) and
    \(\varepsilon \in (0, 1/2) \).  Then uniformly for all disjoint sets of tails \((\cX_{1}^{+},
    \dots, \cX_{i}^{-})\) and disjoint sets of heads \((\cX_{1}^{+},  \dots, \cX_{i}^{-})\) with
    \(\abs{\cX_{j}^{\pm}}=x_{j}^{\pm}\) for \(j \in [i]\), we have, as \(n \to \infty\),
    \begin{equation}\label{PKURP}
        \p{
            \cap_{j=1}^{i}
            [
                A_{1}(
                    \cX^{+}
                    ,
                    \varepsilon
                )
                \cap
                A_{1}(
                    \cX^{-}
                    ,
                    \varepsilon
                )
            ]
        }
        =
        (1+o(1))
        \prod_{j=1}^{i}
        (1-\rho_{-}^{x_{j}^{-}})
        (1-\rho_{+}^{x_{j}^{+}})
        .
    \end{equation}
\end{lemma}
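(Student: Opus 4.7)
The plan is induction on $i$, with the case $i=1$ being \autoref{lem:expand:double} applied with $H = \emptyset$. For the inductive step, set $E_j \coloneqq A_1(\cX_j^+, \varepsilon) \cap A_1(\cX_j^-, \varepsilon)$ and decompose
\[
    \p{\cap_{j=1}^{i} E_j}
    =
    \p{\cap_{j=1}^{i-1} E_j}
    \cdot
    \p{E_i \cond \cap_{j=1}^{i-1} E_j}.
\]
The inductive hypothesis handles the first factor, so the task reduces to showing that the conditional probability equals $(1+o(1))(1-\rho_-^{x_i^-})(1-\rho_+^{x_i^+})$.

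To evaluate this conditional probability I would reveal the BFS explorations initiated from each of $\cX_1^{\pm}, \dots, \cX_{i-1}^{\pm}$ up to its expansion time, and condition on the resulting partial pairing $H'$. On $\cap_{j=1}^{i-1} E_j$ every such exploration pairs at most $\omega (1+\varepsilon) t_0 = O(\log^{6}n \log\log n)$ half-edges, hence $|\cV(H')| = o(\omega^2)$, which is precisely the regime in which \autoref{lem:expand:double} applies.

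The main obstacle is that \autoref{lem:expand:double} implicitly requires the half-edges of $\cX_i^{\pm}$ to be disjoint from $\cP^{\pm}(H')$, i.e.\ untouched by the first $i-1$ explorations. Let $\cX_i \coloneqq \cV(\cX_i^+) \cup \cV(\cX_i^-)$, an $O(1)$-sized set of ``target'' nodes, and let $F$ be the event that no exploration from $\cX_j^{\pm}$ with $j < i$ reaches $\cX_i$ within $(1+\varepsilon) t_0$ steps. If $F$ fails then for some $j < i$ either $\dist(\cX_j^{+}, \cE^{-}(\cX_i))$ or $\dist(\cE^{+}(\cX_i), \cX_j^{-})$ is at most $(1+\varepsilon) t_0 + 1$, which is much smaller than $(1/2-\varepsilon)\log_\nu n$. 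Since $|\cE^{\pm}(\cX_i)| = O(\Delta_n) = o(\sqrt n)$ and $|\cX_j^{\pm}| = O(1)$, the product of cardinalities is $o(\omega \sqrt n)$, so \autoref{lem:dist:1} together with a union bound over the finitely many pairs gives $\p{F^c} = o(1)$.

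On $F \cap \cap_{j=1}^{i-1} E_j$ the sets $\cX_i^{\pm}$ are disjoint from $\cP^{\pm}(H')$ and $|\cV(H')| = o(\omega^2)$, so \autoref{lem:expand:double} applied with partial pairing $H'$ yields $\p{E_i \cond E_{H'}} = (1+o(1))(1-\rho_-^{x_i^-})(1-\rho_+^{x_i^+})$. Summing over admissible $H'$ and absorbing the $o(1)$ contribution from $F^c$ completes the induction.
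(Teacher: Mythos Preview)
Your proof is correct and follows exactly the same induction-and-condition-on-the-revealed-pairing scheme as the paper: the base case is \autoref{lem:expand:double} with empty $H$, and the step conditions on the partial pairing $H'$ produced by the first $i-1$ explorations (which has $|\cV(H')|=o(\omega^2)$) and invokes \autoref{lem:expand:double} again. The paper's own argument is terser and does not explicitly verify that $\cX_i^{\pm}$ remains unpaired in $H'$; your use of \autoref{lem:dist:1} to dispose of the event $F^c$ makes this point rigorous, so your write-up is in fact slightly more careful than the paper's.
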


\begin{proof}
    We prove it by induction. The case \(i=1\) follows by
    \autoref{lem:expand:double} with \(H\) an empty pairing.

    Let \(E_{j}\) denote the event in the LHS of \eqref{PKURP}. Assume that the lemma holds
    for some \(i \ge 1\). Let \(\cH\) be the sets of all possible partial pairings in
    \(\cup_{j=1}^{i} [\cN^{\leq \omega}(\cX_{j}^{+}) \cup \cN^{\leq \omega}(\cX_{j}^{-})]\) compatible with \(E_{i}\). If \(H \in \cH\), then
    \(|\cV(H)|= o(\omega^{2})\). Using \autoref{lem:expand:single} as in~\eqref{PKURP}, we conclude
    \begin{equation*}\label{NPJVD}
            \p{E_{i+1}}
            =
            \sum_{H \in \cH}
            \p{E_{i+1} \cond E_{H}}
            \p{E_{H}}
            =
            (1+o(1)) (1-\rho_{-}^{x_{i+1}^{-}}) (1-\rho_{+}^{x_{i+1}^{+}}) 
            \p{E_{i}}
            .\qedhere
    \end{equation*}
\end{proof}

The last lemma shows that expansions are unlikely to happen very late. 
\begin{lemma}\label{lem:dist:late}
    Assume that \(\nu > 1\).  Fix \(x^\pm \in \N\) and \(\varepsilon \in (0, 1/2) \).  Then uniformly
    for all choices of  \(\cX^{\pm}\subseteq \cE^\pm\) with \(\abs{\cX^{\pm}}=x^{\pm}\), as \(n\to \infty\),
    \begin{equation}\label{CDIOJ}
        \p{
        t_{\omega}(\cX^{\pm}) \in ((1+\varepsilon)t_{0}, \infty)
        }
        =
        o(1).
    \end{equation}
\end{lemma}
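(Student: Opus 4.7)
The plan is to prove the statement for $\cX^{+}$; the case $\cX^{-}$ follows by an identical argument swapping heads for tails and $s_{+n}^{\ua}, \rho_{+}$ for $s_{-n}^{\ua}, \rho_{-}$. The starting observation is the set-difference identity
\[
\pp{t_\omega(\cX^{+}) \in ((1+\varepsilon)t_0, \infty)} = \pp{t_\omega(\cX^{+}) < \infty} - \pp{t_\omega(\cX^{+}) \le (1+\varepsilon)t_0},
\]
so it suffices to show that each term on the right equals $(1+o(1))(1 - \rho_{+}^{x^{+}})$. The subtracted term comes for free: applying \autoref{lem:expand:double} with $H = \emptyset$ (so that $|\cV(H)| = 0 = o(\omega^{2})$) gives $\pp{t_\omega(\cX^{+}) \le (1+\varepsilon)t_0} = (1+o(1))(1-\rho_{+}^{x^{+}})$. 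The core of the argument is then the matching upper bound $\pp{t_\omega(\cX^{+}) < \infty} \le (1+o(1))(1-\rho_{+}^{x^{+}})$.

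For this upper bound I would fix a small $\beta \in (0,1/100)$, set $T_0 = \lfloor n^{\beta/2} \rfloor$, and split
\[
\pp{t_\omega(\cX^{+}) < \infty} \le \pp{t_\omega(\cX^{+}) \le T_0} + \pp{T_0 < t_\omega(\cX^{+}) < \infty}.
\]
On $\{t_\omega(\cX^{+}) \le T_0\}$ we have $p(F_{\cX^{+}}) \le x^{+} + \omega T_0 = o(n^{\beta})$, so the upper bound in \autoref{lem:eq_T_GW} applies. Mirroring the summation in \eqref{OAAQU} gives
\[
\pp{t_\omega(\cX^{+}) \le T_0} \le (1+o(1)) \pp{T_\omega^{(x^{+}),\ua} \le T_0},
\]
where $T_\omega^{(x^{+}),\ua}$ is the hitting time of total generation size $\omega$ for $x^{+}$ independent branching processes with offspring distribution $\Qnua(\beta)$. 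Since $\pp{T_\omega^{(x^{+}),\ua} < \infty} = 1 - (1-s_{+n}^{\ua})^{x^{+}} \to 1 - \rho_{+}^{x^{+}}$ and $T_0 \to \infty$, we also have $\pp{T_\omega^{(x^{+}),\ua} \le T_0} \to 1 - \rho_{+}^{x^{+}}$.

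The main obstacle is the tail piece $\pp{T_0 < t_\omega(\cX^{+}) < \infty}$, which requires ruling out the scenario where the exploration stays alive for super-polylogarithmically many generations without ever producing a generation of size $\omega$. The key observation is that on this event each of the first $T_0$ generations must be nonempty (otherwise $t_\omega = \infty$) and of size strictly less than $\omega$ (otherwise $t_\omega \le T_0$); hence once again $p(F_{\cX^{+}}) = o(n^{\beta})$ and \autoref{lem:eq_T_GW} couples the exploration to the $x^{+}$ upper branching processes through generation $T_0$. The event is then dominated by the probability that at least one branching process has $X_{j,t} \in (0,\omega)$ for every $t \le T_0$. For each individual $j$, Theorem 3.4 of \cite{cai2020a} (the exact estimate already used in \eqref{KTMGN}) bounds this by $C \hnu^{T_0 - O(\log_{\nu}\omega)} = \exp(-\Omega(n^{\beta/2}))$; a union bound over the $x^{+}$ branching processes then gives $\pp{T_0 < t_\omega(\cX^{+}) < \infty} = o(1)$, and the proof concludes.
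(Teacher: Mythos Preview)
Your argument is essentially correct, but it is more circuitous than needed and contains one step that is not justified as written. The paper's proof is direct: after a union bound over the half-edges in $\cX^{\pm}$ to reduce to a single half-edge $e^{\pm}$, it observes that $\{t_\omega(e^{+}) \in (t_1,\infty)\}$ with $t_1 = \lfloor(1+\varepsilon)t_0\rfloor$ forces every generation $0,\dots,t_1$ of the exploration to lie in $(0,\omega)$; coupling to the $\Qnua$ branching process exactly as in the proof of \autoref{lem:expand:single} and applying Theorem~3.4 of \cite{cai2020a} then gives $o(1)$ immediately. Your handling of the tail piece $\pp{T_0 < t_\omega(\cX^{+}) < \infty}$ is precisely this argument with $T_0$ in place of $t_1$ (and without the preliminary union bound); had you taken $T_0 = t_1$, that piece alone would already prove the lemma. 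The set-difference identity, the appeal to \autoref{lem:expand:double}, and the separate analysis of $\pp{t_\omega(\cX^{+})\le T_0}$ are all unnecessary detours.

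The step that fails as stated is the equality $\pp{T_\omega^{(x^{+}),\ua} < \infty} = 1 - (1-s_{+n}^{\ua})^{x^{+}}$. The right-hand side is the survival probability, but a branching process may reach level $\omega$ and subsequently become extinct, so the left-hand side is in general strictly larger. What you actually need is only the upper bound $\pp{T_\omega^{(x^{+}),\ua} \le T_0} \le 1 - \rho_{+}^{x^{+}} + o(1)$, which does hold but requires an extra argument (e.g.\ bounding $\pp{\text{total progeny}\ge \omega \mid \text{extinction}}$ by Markov's inequality, or simply reusing the Theorem~3.4 bound on the interval $(t_1,T_0]$). Likewise, the inference ``$T_0\to\infty$ hence $\pp{T_\omega \le T_0}\to\pp{T_\omega<\infty}$'' is not automatic when both $\omega$ and the offspring law depend on $n$. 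These issues are repairable, but they underscore that the paper's direct route is the cleaner one.
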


\begin{proof}
    Let \(t_{1} = \floor{(1+\varepsilon) t_{0}}\).  
    Note that
    \begin{equation}\label{AVNZR}
        \p{
            t_{\omega}(\cX^{\pm}) \in (t_{1}, \infty)
        }
        \le
        \sum_{e^{\pm} \in \cX^{\pm}}
        \p{
            t_{\omega}(e^{\pm}) \in (t_{1}, \infty)
        }
        .
    \end{equation}
    Thus we may assume that \(\cX^{\pm} = \{e^{\pm}\}\).  Let
    \(X_{t}^{\ua}\) be the size of the \(t\)-th generation of a branching process with offspring
    distribution \(Q_{n}^{\ua}(\beta)\) for some \(\beta \in (0,1/10)\).  Let
    \(T_{\omega}=\inf\{ t\geq 1:\,X_{t}^{\ua} \ge \omega\}\).  Then it follows from
    \cite[Theorem~3.4]{cai2020a} that there exist constants \(C > 0\) and \(\hnu \in (0,1)\) such
    that
    \begin{equation}\label{PWMIJ}
        \p{
            T_{\omega} \in (t_{1}, \infty)
        }
        \le
        \p{
            \cap_{t=0}^{t_{1}}
            [X_{t} \in (0,\omega)]
        }
        \le
        C ((1+o(1) \hnu)^{(1+\varepsilon)t_{0} - (1+o(1)) t_{0}}
        =
        o(1)
        .
    \end{equation}
    By the same argument as in \autoref{lem:expand:single}, this implies 
    \( \p{ t_{\omega}(e^{\pm}) \in (t_{1}, \infty) } = o(1) \).
\end{proof}

\section{Expectation and variance}
\label{sec:L}
\begin{lemma}\label{lem:L:mmt}
    Assume that \(\nu > 1\). 
    Let
    \begin{equation}\label{KDVVA}
        \cL
        \coloneqq
        \left\{ 
            v \in [n]
            :
            t_{\omega}(\cE^{+}(v)) < \infty, \,
            t_{\omega}(\cE^{-}(v)) < \infty
        \right\}
        .
    \end{equation}
    Then
    \begin{equation}\label{MSHJN}
        \frac{\ee{\abs{\cL}}}{n}
        \to
        \eta
        ,
        \qquad
        \frac{\ee{\abs{\cL}^{2}}}{n^{2}}
        \to
        \eta^{2}
        ,
    \end{equation}
    where \(\eta\) is defined as in~\eqref{FXADH}.
    Thus, \(\abs{\cL}/n \to \eta\) in probability.
\end{lemma}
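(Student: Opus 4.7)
The plan is to compute the first and second moments of \(\abs{\cL}\) using the expansion lemmas of \autoref{sec:tower}, and then conclude by Chebyshev's inequality. Set \(g(k,\ell) \coloneqq (1-\rho_{-}^{k})(1-\rho_{+}^{\ell})\), so that \(\eta = \sum_{k,\ell} \lambda_{k,\ell} g(k,\ell)\).

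I would first establish a pointwise estimate: for any vertex \(v\) of degree \((k,\ell)\), \(\p{v \in \cL} \to g(k,\ell)\). The lower bound comes from \autoref{lem:expand:double} applied with \(H\) empty and \(\cX^{\pm} = \cE^{\pm}(v)\), which yields \(\p{A_{1}(\cE^{+}(v), \varepsilon) \cap A_{1}(\cE^{-}(v), \varepsilon)} \to g(k,\ell)\). For the upper bound, note that if both \(t_{\omega}(\cE^{\pm}(v))\) are finite but the intersection \(A_{1}(\cE^{+}(v), \varepsilon) \cap A_{1}(\cE^{-}(v), \varepsilon)\) fails, then one of the two "late expansion" events \([t_{\omega}(\cE^{\pm}(v)) \in ((1+\varepsilon)t_{0}, \infty)]\) must hold, and each of these is \(o(1)\) by \autoref{lem:dist:late}.

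Next, I would deduce the first moment by grouping vertices by degree, writing \(\E{\abs{\cL}}/n = \sum_{k,\ell} (n_{k,\ell}/n) \p{v \in \cL}\). Since the pointwise error is not uniform in \((k,\ell)\), I would truncate: for fixed \(K\), the finite sum over \(\max(k,\ell) \le K\) converges to \(\sum_{k,\ell \le K} \lambda_{k,\ell} g(k,\ell)\) by \autoref{cond:main}\,(i), while the tail is bounded by \(\sum_{\max(k,\ell) > K} n_{k,\ell}/n\), which tends to \(\p{\max(D^{-},D^{+}) > K}\) and vanishes as \(K \to \infty\). For the second moment, the diagonal is \(O(n)\); for distinct \(u, v\) the four sets \(\cE^{+}(u), \cE^{-}(u), \cE^{+}(v), \cE^{-}(v)\) are pairwise disjoint, so \autoref{lem:expand:4} with \(i=2\), combined with four applications of \autoref{lem:dist:late}, gives \(\p{u, v \in \cL} \to g(d_{u}) g(d_{v})\) pointwise in the four degrees. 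The same truncation in \([n]^{2}\) then yields \(\E{\abs{\cL}^{2}}/n^{2} \to \eta^{2}\). The variance of \(\abs{\cL}/n\) therefore tends to zero and Chebyshev's inequality delivers \(\abs{\cL}/n \to \eta\) in probability.

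The main obstacle I anticipate is precisely the truncation step. \autoref{lem:expand:double}, \autoref{lem:expand:4} and \autoref{lem:dist:late} are all stated for \emph{fixed} set sizes, with error terms whose dependence on the sizes is not quantified, and \(\Delta_{n}\) can be as large as \(o(\sqrt{n})\) under \autoref{cond:main}. Consequently the approximation \(\p{v \in \cL} = g(d_{v}) + o(1)\) cannot be applied uniformly to all vertices. Vertices of large degree must instead be absorbed into a tail term controlled by the crude bound \(\p{v \in \cL} \le 1\) together with \(\sum_{k,\ell} \lambda_{k,\ell} = 1\); making the double limit in \(n\) and \(K\) work cleanly, especially in the \([n]^{2}\) sum for the second moment, is what requires care.
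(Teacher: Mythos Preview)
Your proposal is correct and follows essentially the same route as the paper: compute \(\p{v\in\cL}\) and \(\p{u,v\in\cL}\) pointwise for fixed degrees via \autoref{lem:expand:4} and \autoref{lem:dist:late}, then sum. The paper packages the truncation you worry about as an appeal to the dominated convergence theorem (the summands are bounded by \(n_{k,\ell}/n\), which sum to \(1\)), and for the second moment it controls the gap globally via \(\ee{|\cL|^{2}-|\cL(\varepsilon)|^{2}}\le 2n\,\ee{|\cL\setminus\cL(\varepsilon)|}=o(n^{2})\) rather than by four separate applications of \autoref{lem:dist:late}; otherwise the arguments coincide.
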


\begin{proof}
   As \(\rho_{\pm} < 1\) and \(\sum_{i,j \ge 0} 
    \lambda_{i,j} = 1\), we have \(\eta \in (0,1)\).
      Fix \(\varepsilon \in (0,1/2)\). Define 
\begin{equation}\label{MZCUQ}
    \cL(\varepsilon)
    \coloneqq
    \left\{ 
        v \in [n]
        :
        t_{\omega}(\cE^{+}(v)) < (1+\varepsilon)t_{0}, \,
        t_{\omega}(\cE^{-}(v)) < (1+\varepsilon)t_{0}
    \right\}
    .
\end{equation}
   and note that \(\cL(\varepsilon)\subseteq \cL\). Given \(v\in [n]\) with \(i\) heads and \(j\) tails, it follows from 
    \autoref{lem:expand:4} that
    \begin{equation}\label{HLSEN}
        p_{i,j} \coloneqq \p{v \in \cL(\varepsilon)} = (1+o(1)) (1-\rho_{-}^{i})(1-\rho_{+}^{j}).
    \end{equation}
    Since there are \(n_{i,j}\) such nodes, by (i) of \autoref{cond:main},
    \begin{equation}\label{EFRQT}
        \frac{\ee{\abs{\cL(\varepsilon)}}}{n}
        =
        \sum_{i,j\ge 0}
        \frac{n_{i,j}}{n}\,
        p_{i,j}
        =
        \sum_{i,j \ge 0}
        (1+o(1))
        \lambda_{i,j} 
        (1-\rho_{-}^{i})
        (1-\rho_{+}^{j})
        \to
        \eta
        .
    \end{equation}
    To see that the sum above converges to \(\eta\), note that \(\sum_{i,j \ge 0} \frac{n_{i,j}}{n}
    = 1\) and \(p_{i,j}\leq 1\). Thus we can apply the dominated convergence theorem by considering the double sum as an
    integral over \(\dsZ_{\ge 0}^{2}\) with respect to the counting measure.
    \autoref{lem:dist:late} implies \(\p{v \in \cL \setminus \cL(\varepsilon)} = o(1)\). Thus \( \ee{\abs{\cL \setminus \cL(\varepsilon)}} = o(n)\),
    which finishes the proof for the expectation.

    Given distinct \(v_{1}, v_{2}\in [n]\) with degrees \((i_{1},j_1)\) and \((i_{2},j_2)\), again by \autoref{lem:expand:4}
    \begin{equation}\label{NBDKX}
        p_{i_1,j_1,i_2,j_2}
        \coloneqq
        \p{
            [v_{1} \in \cL(\varepsilon)] 
            \cap
            [v_{2} \in \cL(\varepsilon)] 
        } 
        = 
        (1+o(1)) 
        \prod_{r=1}^{2}
        (1-\rho_{-}^{i_{r}})(1-\rho_{+}^{j_r})
        .
    \end{equation}
    By  the same convergence argument used in~\eqref{EFRQT}, we have
    \begin{equation}\label{VSXEP}
        \frac{\ee{\abs{\cL(\varepsilon)}^{2}}}{n^2}
        =
        o(1)
        +
        \sum_{i_{1},j_1,i_2,j_2 \ge 0}        
        \frac{n_{i_1,j_1}n_{i_2,j_2}}{n^2}\,
        p_{i_1,j_1,i_2,j_2}
        \to
        \eta^{2}
        .
    \end{equation}
    As \( \ee{\abs{\cL \setminus \cL(\varepsilon)}} = o(n)\), the following concludes  the proof for the second moment:
    \begin{equation*}
        \ee{\abs{\cL}^{2} - \abs{\cL(\varepsilon)}^{2}} 
        \le 
        2n
        \ee{\abs{\cL\setminus\cL(\varepsilon)}} 
        =
        o(n^{2}).\qedhere
    \end{equation*}
\end{proof}


\begin{lemma}\label{lem:Le:mmt}
    Assume that \(\nu > 1\). Let \(\cL_{e}\) be the set of edges whose both endpoints are in \(\cL\). Then
    \begin{equation}\label{LZOCR}
        \frac{\ee{\abs{\cL_{e}}}}{n}
        \to
        \lambda s_{-} s_{+}
        ,
        \qquad
        \frac{\ee{\abs{\cL_{e}}^{2}}}{n^{2}}
        \to
        (\lambda s_{-} s_{+})^{2}
        .
    \end{equation}
    Thus \(\abs{\cL_{e}}/n \to \zeta\) in probability.
\end{lemma}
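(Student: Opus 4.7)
The plan is to mirror the proof of \autoref{lem:L:mmt}, but now enumerating matched half-edge pairs (edges) rather than nodes. The key structural observation is that for any edge $u \to v$ in $\vecGn$, the event $\{u, v \in \cL\}$ is, up to negligible $O(\log t_0)$ shifts in expansion time, equivalent to $\{t_\omega(\cE^-(u)) < \infty\} \cap \{t_\omega(\cE^+(v)) < \infty\}$. Indeed, since one tail of $u$ is paired with a head of $v$, every tail at distance $t$ from $\cE^+(v)$ lies at distance $\le t+1$ from $\cE^+(u)$; combined with the exponential growth of the supercritical BFS (controlled via \cite[Section~3]{cai2020a}), this gives $t_\omega(\cE^+(u)) \le t_\omega(\cE^+(v)) + O(\log t_0)$, and symmetrically $t_\omega(\cE^-(v)) \le t_\omega(\cE^-(u)) + O(\log t_0)$. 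The two ``redundant'' expansions ($u$'s out and $v$'s in) thus follow automatically.

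For the first moment, define the truncated set $\cL_e(\varepsilon)$ of matched pairs $(e^+, e^-)$ satisfying both $t_\omega(\cE^-(\cV(e^+))) \le (1+\varepsilon)t_0$ and $t_\omega(\cE^+(\cV(e^-))) \le (1+\varepsilon)t_0$. Writing $H_{e^+,e^-} = \{e^+ \leftrightarrow e^-\}$ (so $|\cV(H_{e^+,e^-})| \le 2 = o(\omega^2)$), by linearity and conditioning,
\[ \ee{|\cL_e(\varepsilon)|} = \sum_{e^+ \in \cE^+,\, e^- \in \cE^-} \frac{1}{m_n}\, \p{A_1(\cE^-(\cV(e^+)),\varepsilon) \cap A_1(\cE^+(\cV(e^-)),\varepsilon) \cond E_{H_{e^+,e^-}}}. \]
Since the initial sets $\cE^-(\cV(e^+))$ and $\cE^+(\cV(e^-))$ are disjoint when $\cV(e^+) \ne \cV(e^-)$, \autoref{lem:expand:double} gives the conditional probability as $(1+o(1))(1-\rho_-^i)(1-\rho_+^{j'})$, where $i$ is the in-degree of $\cV(e^+)$ and $j'$ the out-degree of $\cV(e^-)$. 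Summing and applying \autoref{cond:main} together with the fixed-point relations $\partial_w f(\rho_-, 1) = \lambda \rho_-$ and $\partial_z f(1, \rho_+) = \lambda \rho_+$ gives
\[ \frac{\ee{|\cL_e(\varepsilon)|}}{n} \to \Bigl(\sum_{i,j} j \lambda_{i,j}(1-\rho_-^i)\Bigr) \cdot \frac{1}{\lambda}\Bigl(\sum_{i',j'} i' \lambda_{i',j'}(1-\rho_+^{j'})\Bigr) = \lambda s_- s_+. \]
The slow-expansion error $\ee{|\cL_e \setminus \cL_e(\varepsilon)|} = o(n)$ follows from \autoref{lem:dist:late}.

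For the second moment, sum over ordered pairs of matched half-edge pairs. For generic pairs (with four distinct half-edges and four distinct owners), iterate \autoref{lem:expand:double} conditioning on both edges' partial pairing ($|\cV(H)| \le 4 = o(\omega^2)$) to obtain a four-factor product; summing reproduces $(\lambda s_- s_+)^2 n^2 + o(n^2)$. Degenerate pairs (sharing a half-edge or an owner) number $O(n)$ and contribute $o(n^2)$. Chebyshev's inequality then gives $|\cL_e|/n \to \lambda s_- s_+$ in probability. The main technical obstacle is turning the cumulative containment $\cN_t^+(\cE^+(v)) \subseteq \cN_{\le t+1}^+(\cE^+(u))$ into a bound on the single-level expansion time $t_\omega$, and ensuring uniform $(1+o(1))$ control of the conditional expansion probabilities across the varying degrees and partial pairings; both mirror technical points already handled in the preceding lemmas.
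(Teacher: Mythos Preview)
Your approach is essentially the one the paper sketches: enumerate matched half-edge pairs, condition on the edge as a partial pairing $H$, and compute the probability that the two \emph{non-redundant} expansions (in-expansion of the source, out-expansion of the target) succeed via \autoref{lem:expand:double}. The paper condenses all of this into the single line ``conditioning on $X_{v_1,v_2}$, the probability that both $v_1$ and $v_2$ are in $\cL$ converges to $(1-\rho_-^{i_1})(1-\rho_+^{j_2})$'', which already presupposes exactly the structural observation you spell out---that the edge $v_1\to v_2$ makes $t_\omega(\cE^+(v_1))<\infty$ and $t_\omega(\cE^-(v_2))<\infty$ automatic once the other two hold. Your first-moment computation, the use of the fixed-point identities $\partial_w f(\rho_-,1)=\lambda\rho_-$ and $\partial_z f(1,\rho_+)=\lambda\rho_+$, and the second-moment sketch all match the paper. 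The technical obstacle you flag (cumulative containment $\cN_t^+(\cE^+(v))\subseteq \cN_{\le t+1}^+(\cE^+(u))$ versus the single-level definition of $t_\omega$) is real and is glossed over in the paper's sketch as well; it is handled by the supercritical growth estimates in \cite[Section~6]{cai2020a}, and you are right that it mirrors points already treated in the preceding lemmas.
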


\begin{proof}
    We only sketch the proof since the argument is very similar to that of \autoref{lem:L:mmt}.

    Given \(v_{1}, v_{2}\in [n]\) with degrees \((i_{1},j_1)\) and \((i_{2},j_2)\) respectively, the
    number of edges \(X_{v_1,v_2}\) from \(v_1\) to \(v_2\) satisfies
    \(\ee{X_{v_1,v_2}}=j_{1}i_2/m_{n}\).  It follows from \autoref{lem:expand:4} that conditioning
    on \(X_{v_1,v_2}\), the probability that both \(v_{1}\) and \(v_{2}\) are in \(\cL\) converges
    to \( p_{v_{1}v_{2}}\coloneqq (1-\rho_{-}^{i_{1}}) (1-\rho_{+}^{j_{2}})\). We have
    \begin{equation*}
        \begin{aligned}
            \frac{ \E{\abs{\cL_{e}}}}{n}
            &=\sum_{v_1,v_2\in [n]}             
            \frac{
            \ee{X_{v_1,v_2}}\cdot(1+o(1))p_{v_{1},v_{2}}
            }{
                n
            }
            \\
            &=
            \sum_{i_{1},j_1,i_2,j_2 \ge 0}
            (1+o(1))
            \frac{
            n_{i_{1},j_1}n_{i_{2},j_2}
            }{
                n
            }
            \frac{j_1 i_2}{m_{n}}           
            (1-\rho_{-}^{i_{1}})
            (1-\rho_{+}^{j_{2}})
            \\
            &
            =
            \frac{1}{\lambda}
            \sum_{i_{1},j_1\ge 0}
            \sum_{i_{2},j_2 \ge 0}
            (1+o(1))
            \lambda_{i_{1},j_{1}}
            j_{1}
            (1-\rho_{-}^{i_{1}})\cdot 
            \lambda_{i_{2},j_{2}}
            i_{2}
            (1-\rho_{+}^{j_{2}})
            \\
            &
            \to
            \lambda 
            \left( 
                1 - \frac{1}{\lambda} \frac{\partial f}{\partial w}(\rho_{-},1)
            \right)
            \left( 
                1 - \frac{1}{\lambda} \frac{\partial f}{\partial z}(1,\rho_{+})
            \right)
            = 
            \lambda 
            s_{-} s_{+}
            .
        \end{aligned}
    \end{equation*}
    The proof for the second moment is similar and we omit it.
\end{proof}

\section{Proof of \autoref{thm:giant}}
\label{sec:final}

If \(\nu>1\), it suffices to show that whp the set \(\cL\) defined in~\eqref{KDVVA} exactly coincides with the largest
\scc. Then \eqref{ONDOS} and \eqref{MYYET} in \autoref{thm:giant} follow immediately from
\autoref{lem:L:mmt} and \autoref{lem:Le:mmt}.

By \cite[Proposition 7.2]{cai2020a}, uniformly for all
\(\cX^{\pm} \subseteq \cE^{\pm}\) with \(\abs{\cX^{\pm}} \ge \omega\),
\begin{equation}\label{FYHSY}
    \p{\dist(\cX^{+}, \cX^{-}) = \infty} = o(n^{-100}),
\end{equation}
and \(\cL\) is contained in a \scc{} whp. We will show that whp there is no other vertex in it. Let
\begin{equation}\label{QEPQR}
    A_{3}({e^{\pm}}, t) = 
    \left[ 
        \cap_{r = 1}^{t}
        [
        0 < \cN_{r}(e^{\pm}) < \omega
        ]
    \right]
    .
\end{equation}
By \cite[Proposition 6.1]{cai2020a}, there exists a constant \(\hnu{}_{\pm} \in (0,1)\) such that for
\(t = \Theta(\log n)\),
\begin{equation}\label{PHMKD}
    \p{A_{3}({e^{\pm}}, t)}
    =
    \hnu_{\pm}^{(1+o(1))t}
    .
\end{equation}
Thus, letting \(t_{2}^{\pm} = \ceil{2 \log _{{1}/{\hat{\nu}_{\pm}}}(n)}\), we have
\begin{equation}\label{MADDM}
    \p{
        \cup_{e^{+} \in \cE^{+}}
        \cup_{e^{-} \in \cE^{-}}
        \left[ 
        A_{3}({e^{+}}, t_{2}^{+})
        \cup
        A_{3}({e^{-}}, t_{2}^{-})
        \right]
    }
    \le
    (
        m_{n}
        n^{-3/2}
    )^{2}
    =
    o(1)
    .
\end{equation}
Therefore, whp, each node \(v \in [n] \setminus \cL\) either can reach or can be reached from at
most \(\omega t_{2}^\pm = O(\log^{7} n)\) other nodes. This implies that whp \(\cL\) is a \scc{}
and that any other \scc{} has order \( O(\log^{7} n)\).  This concludes the proof of  the
supercritical case.

For the subcritical case, we first show the following lemma.
\begin{lemma}\label{lem:cycle}
    Assume that \(\nu < 1\). Let \(C_{n,\geq \ell}\) be the number of directed simple cycles in \(\vecGn\) of length at least \(\ell\). Then,
        \begin{equation}\label{MTEEL}
        \limsup_{n \to \infty} \ee{C_{n,\geq 1}} \le \log
		\left(
        \frac{1}{1-\nu}
        \right)
        .
    \end{equation}
    Moreover, for any \(\ell_n\to \infty\), 
    \begin{equation}\label{GHBDE}
        \limsup_{n \to \infty} \ee{C_{n,\geq \ell_n}} = 0
        .
    \end{equation}
\end{lemma}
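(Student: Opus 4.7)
The plan is a first-moment argument with truncation at logarithmic cycle length. Every simple directed cycle of length \(k\) corresponds to a cyclic tuple \((v_1,\ldots,v_k)\) of distinct vertices together with a tail of \(v_i\) and a head of \(v_i\) for each \(i\), subject to the constraint that the chosen tail of \(v_i\) is paired with the chosen head of \(v_{i+1}\). Since each cycle has \(k\) cyclic rotations and a prescribed set of \(k\) tail-head pairings appears in the random matching with probability \(1/(m_n)_k\), I would start from
\[
    \ee{C_{n,k}} = \frac{1}{k\,(m_n)_k} \sum_{(v_1,\ldots,v_k)\text{ distinct}} \prod_{i=1}^k d_{v_i}^+\,d_{v_i}^-,
\]
where \((m_n)_k := m_n(m_n-1)\cdots(m_n-k+1)\). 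Using \(\sum_v d_v^+ d_v^- = m_n\nu_n\) with \(\nu_n \to \nu\) from \eqref{eq:dnin:nu}, the bound \(\Delta_n = o(\sqrt n)\) from \cite[Corollary~2.4]{cai2020a} to discard tuples with repeated vertices, and \((m_n)_k = m_n^k(1+o(1))\) for fixed \(k\), this yields the term-wise convergence \(\ee{C_{n,k}} \to \nu^k/k\).

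To pass to the full sum, I would truncate at \(L = L(n) = \lceil 2\log n/\abs{\log\nu}\rceil\), chosen so that \(L^2/m_n \to 0\) and \(m_n\nu^L \to 0\). For the short part \(k \le L\), the elementary inequality \((m_n)_k \ge m_n^k e^{-k^2/m_n}\) (valid for \(k \le m_n/2\)) combined with \(\sum_{\text{distinct}}\prod d_{v_i}^+ d_{v_i}^- \le (m_n\nu_n)^k\) gives \(\ee{C_{n,k}} \le (\nu_n^k/k)\,e^{k^2/m_n}\), whence
\[
    \sum_{k=1}^{L} \ee{C_{n,k}} \le (1+o(1))\sum_{k=1}^{L} \frac{\nu_n^k}{k} \longrightarrow \log\frac{1}{1-\nu}.
\]

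For the long part \(k > L\) I plan to use a branching-process argument: every cycle of length \(k\) through \(v\) arises when the BFS from some out-tail of \(v\) returns to an in-head of \(v\) at depth exactly \(k\). By the subcritical variant of the coupling in \autoref{lem:eq_T_GW} (offspring distribution \(Q_n^\ua\) whose mean tends to \(\nu<1\)), the expected size of generation \(k-1\) in the exploration from any fixed tail is \((1+o(1))\nu_n^{k-1}\), up to an error from the rare event of a large exploration. Summing the \(d_v^+\) choices of starting tail and the probability \(d_v^-/m_n\) of landing back at \(v\), then summing over \(v\), should yield \(k\,\ee{C_{n,k}} \le (1+o(1))\nu_n^k\) uniformly in \(k\), so that
\[
\sum_{k>L} \ee{C_{n,k}} \le (1+o(1))\sum_{k>L}\frac{\nu^k}{k} = o(1)
\]
as \(L \to \infty\), proving \(\limsup_n \ee{C_{n,\ge 1}} \le \log(1/(1-\nu))\). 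The main obstacle is making the branching-process coupling uniformly valid up to linear \(k\): the exceptional event that a BFS tree becomes large (which breaks the coupling) must be shown to have super-polynomially small probability so that its contribution to \(\ee{C_{n,k}}\) is negligible; this should follow from the exponential decay of the tail of the total progeny of a subcritical Galton--Watson tree.

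The second claim \eqref{GHBDE} follows from the same scheme with \(\ell_n\) in place of \(1\) as the lower limit of the short-cycle sum: since \(\ell_n \to \infty\), the tail \(\sum_{k \ge \ell_n}\nu^k/k\) of the convergent series tends to zero, while the long-cycle contribution remains \(o(1)\); hence \(\ee{C_{n,\ge \ell_n}} \to 0\).
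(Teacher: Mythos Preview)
Your starting formula for $\ee{C_{n,k}}$ is correct and is essentially where the paper begins too; the paper simply packages the path count as a citation of \cite[Lemma~7.3]{cai2020a}, which bounds the expected number of simple paths of length $k$ from $\cE^+(v)$ to $\cE^-(v)$ by $(1+o(1))ij\,\nu^{\,k-1}/m_n$ uniformly in $k$. With that uniform bound there is no short/long split at all: summing over $v$ gives $\ee{C_{n,k}}\le(1+o(1))\nu^k/k$ for every $k$, and summing over $k\ge 1$ (respectively $k\ge\ell_n$) yields \eqref{MTEEL} and \eqref{GHBDE} in one line.

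Your long-cycle plan has a genuine gap. \autoref{cond:main} only assumes second moments of $D$, so the offspring law $\dinp$ (and hence $Q_n^{\ua}$) is guaranteed merely a finite \emph{mean}; no exponential moment is available, and the total progeny of the corresponding subcritical Galton--Watson tree need not have exponentially decaying tails. On the event that the exploration exceeds $n^\beta$ paired half-edges---precisely where the coupling of \autoref{lem:eq_T_GW} fails---you have no deterministic control on the number of length-$k$ simple paths it may contain, so a merely polynomial bound on the probability of that event does not control its contribution to $\ee{C_{n,k}}$. The clean fix, and morally what the cited lemma encodes, is to sharpen your combinatorial estimate rather than pass to branching processes: replace $\sum_{\text{distinct}}\prod_i d_{v_i}^+d_{v_i}^-\le(m_n\nu_n)^k$ by the falling-factorial bound $(m_n\nu_n)_k$, since choosing a head and a tail at each of $k$ distinct vertices is in particular a choice of $k$ distinct (head,\,tail) pairs from the $\sum_v d_v^+d_v^-=m_n\nu_n$ available. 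As $\nu_n<1$ forces $(m_n\nu_n-i)/(m_n-i)\le\nu_n$ for every $i\ge0$, this yields $\ee{C_{n,k}}\le\nu_n^{\,k}/k$ for \emph{all} $k$, and both conclusions follow immediately with no truncation.
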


\begin{proof}
    Let \(C_{n,k}\) be the number of directed cycles of length \(k\geq 1\). (If \(k=1\), then  \(C_{n,1}\) is the number of loops.) Let \(v\in [n]\) with degrees \((i,j)\). By \cite[Lemma~7.3]{cai2020a} the
    expected number of simple paths of length \(k\) from \(\cE^{+}(v)\) to \(\cE^{-}(v)\) is  at most \( (1+o(1)) ij\nu^{k-1}/m_{n}\).
    As each cycle of length \(k\) is counted \(k\) times, we have
    \begin{equation}\label{YOTYY}
        \ee{C_{n,k}} 
        \le
        \frac{1}{k}\sum_{i,j \ge 0}
        (1+o(1)) \frac{n_{i,j} i j\nu^{k-1}}{m_{n}}
        \to
        \frac{\nu^{k}}{k}
        .
    \end{equation}
   We conclude that
    \begin{align}\label{NLLWR}
        \limsup_{n \to \infty} \ee{C_{n,\geq 1}} 
        &=
        \limsup_{n \to \infty} \sum_{k \ge 1} \ee{C_{n,k}} 
        \le 
        \sum_{k \ge 1}
        \frac{\nu^{k} }{k}
        =
        \log
         \left(
        \frac{1}{1-\nu}
		\right)        
        ,
        \\
        \limsup_{n \to \infty} \ee{C_{n,\geq \ell}} 
        &=
        \limsup_{n \to \infty} \sum_{k \ge \ell} \ee{C_{n,k}} 
        \le 
        \sum_{k \ge \ell}
        \frac{\nu^{k}}{k}
        \leq 
        \frac{1}{\ell+1}\left(\frac{\nu}{1-\nu}\right)^\ell
        ,
    \end{align}
where the last inequality follows from the error bound on the Taylor approximation of \(\log(\frac{1}{1-\nu})\).
\end{proof}

The above lemma shows that, for any \(\ell_{n} \to \infty\), whp (i) there are at most \(\ell_{n}\) cycles
in \(\vecGn\), and (ii) all cycles have length at most \(\ell_n\). As any vertex in a \scc belongs to at
least one cycle, it follows that any \scc has order at most \(\ell_n^2\).  This finishes the proof of
the subcritical case.

\begin{rem}
    In \cite{cai2017b}, it was showed that the number of cycles outside the giant of a
    uniform random \(k\)-out digraph with \(k \ge 2\) converges to a Poisson distribution.
    We believe that similar methods can be applied to derive that the law of \(C_{n, \ge 1}\) converges to
a Poisson distribution with mean \( \log (\frac{1}{1-\nu})\).
\end{rem}

\section{Binomial Random Digraphs}\label{sec:eg}

The binomial random digraph \(\dsD_{n,p}\) is a simple digraph on \([n]\) in which each ordered pair
of nodes is connected with an arc independently at random with probability \(p\), see~\cite[Chapter
12]{frieze2015}.

Although the degrees of nodes in \(\dsD_{n,p}\) are random, conditioning on its degree sequence,
\(\dsD_{n,p}\) has the same probability to be any simple digraph with such a degree sequence. Thus
we can study its properties through the directed configuration model. Using this method, we were
able to show that the diameter of \(\dsD_{n,p}\) converges in probability in \cite[Theorem
9.5]{cai2020a}.

The same argument can be applied to determine the largest \scc in \(\dsD_{n,p}\).  Assuming that \(n
p \to \nu\), the degree of a uniform random node in \(\dsD_{n,p}\) converges in distribution to two
independent Poisson random variables with mean \(\nu\).  Thus, by \autoref{thm:giant} we recover the
following result by Karp~\cite{karp1990}:

\begin{thm}\label{thm:binom}
    Assume that \(n p \to \nu \).
    Let \(\rho\) be the smallest solution of \(\rho = 
    e^{-\nu(1-\rho)}\) on \( (0,1]\). 
    Let \(\giant{}\) be the largest \scc in \(\dsD_{n,p}\).
    Then
    \begin{equation}\label{AOPLN}
        \frac{v(\giant)}{n}
        \to
        \left( 1- \rho \right)^{2}
        ,
        \qquad
        \frac{e(\giant)}{n}
        \to
        \nu
        \left( 1- \rho \right)^{2}
        ,
    \end{equation}
    in expectation, in second moment and in probability.
\end{thm}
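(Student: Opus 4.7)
The plan is to derive \autoref{thm:binom} from \autoref{thm:giant} via the standard conditioning argument for \(\dsD_{n,p}\). Conditional on its bi-degree sequence \(\vbfd_{n}\), the digraph \(\dsD_{n,p}\) is uniform over simple digraphs with that sequence. By the remark after \autoref{thm:giant}, whenever \autoref{cond:main} holds, \(\vecGn\) is simple with probability bounded away from \(0\), so the conclusions of \autoref{thm:giant} transfer to \(\dsD_{n,p}\). It therefore suffices to verify \autoref{cond:main} with high probability for the random degree sequence of \(\dsD_{n,p}\) and then to compute the resulting constants \(\eta\) and \(\lambda s_{-}s_{+}\).

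The candidate limit is \(D=(D^{-},D^{+})\) with independent \(\Poi(\nu)\) coordinates, since the in-degree and out-degree of a single vertex in \(\dsD_{n,p}\) depend on disjoint sets of potential arcs and are therefore independent \(\Bin(n-1,p)\) random variables. I would verify \autoref{cond:main} by the standard second moment method for the counts \(n_{k,\ell}\): one has \(\ee{n_{k,\ell}}/n \to \lambda_{k,\ell} = e^{-2\nu}\nu^{k+\ell}/(k!\ell!)\), and any two distinct vertices couple only through the two potential arcs between them, so \(\Vv{n_{k,\ell}}=O(n)\) and hence \(n_{k,\ell}/n \to \lambda_{k,\ell}\) in probability. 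Concentration of \(\sum_i d_i^{\pm}\) and \(\sum_i(d_i^{\pm})^2\) around their binomial means then supplies parts (ii) and (iii).

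A short computation identifies the constants. By independence, \(\e[D^{-}D^{+}]=\nu^{2}\), so the criticality parameter in \eqref{eq:supercri} equals \(\nu\) itself. The generating function factorises as \(f(z,w)=e^{-\nu(1-z)}e^{-\nu(1-w)}\), giving \(\tfrac{1}{\lambda}\tfrac{\partial f}{\partial w}(z,1)=e^{-\nu(1-z)}\); both equations in \eqref{QPICZ} therefore reduce to \(\rho=e^{-\nu(1-\rho)}\), so \(\rho_{-}=\rho_{+}=\rho\) and \(s_{-}=s_{+}=1-\rho\). Substituting \(f(\rho,1)=f(1,\rho)=\rho\) and \(f(\rho,\rho)=\rho^{2}\) into \eqref{FXADH} yields \(\eta=1+\rho^{2}-2\rho=(1-\rho)^{2}\), while \(\lambda s_{-}s_{+}=\nu(1-\rho)^{2}\), matching the right-hand sides of \eqref{AOPLN}.

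In the supercritical case \(\nu>1\), \autoref{thm:giant} directly gives the three modes of convergence. In the subcritical case \(\nu<1\) we have \(\rho=1\), so both limits vanish; convergence of \(v(\giant)/n\) in probability and expectation follows from \eqref{FSXMO} with \(a_{n}=n\), and the trivial bound \(v(\giant) \le n\) upgrades this to the second moment by bounded convergence. For \(e(\giant)/n\), the subcritical analysis in \autoref{sec:final} forces \(v(\giant)\) to be polylogarithmic whp, which combined with the standard whp bound \(\Delta_{n}=O(\log n)\) in \(\dsD_{n,p}\) yields \(e(\giant)/n \to 0\) in probability; dominating \(e(\giant)\) by the total edge count \(|E(\dsD_{n,p})|\), whose first and second moments scaled by \(n\) stay bounded, gives uniform integrability and hence the moment convergences. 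The main technical step is the verification of \autoref{cond:main} whp for the random degree sequence, but this reduces to the routine second moment calculation sketched above.
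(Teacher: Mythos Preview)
Your approach is exactly the paper's: condition on the degree sequence, observe that the limiting degree law is a pair of independent $\Poi(\nu)$ variables so that \autoref{cond:main} holds and the parameter in \eqref{eq:supercri} is $\nu$ itself, and then read off $\eta=(1-\rho)^2$ and $\lambda s_-s_+=\nu(1-\rho)^2$ from \autoref{thm:giant}; the paper in fact defers the verification of \autoref{cond:main} to the analogous argument in \cite[Theorem~9.5]{cai2020a} and leaves the computation of the constants implicit. Your treatment of the subcritical case (second-moment convergence for $v(\giant)/n$ via bounded convergence, and $e(\giant)/n\to 0$ via the cycle bound plus uniform integrability from $|E(\dsD_{n,p})|/n$) is more careful than the paper's sketch, which does not spell these points out.
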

The case \(\nu=1\) has attracted some attention recently. Coulson~\cite{coulson2019}
determined the critical window of the model, and Goldschmidt and Stephenson~\cite{goldschmidt2019}
showed convergence of the  sequence of rescaled largest \scc within the critical window.

Pittel and Poole~\cite{pittel2016} showed that in fact the joint distribution of \(v(\giant)\) and
\(e(\giant)\) is asymptotically Gaussian in \(\dsD_{n, p}\). It is interesting to see if this
holds in the directed configuration model.

\bibliographystyle{abbrvnat}
\bibliography{rdg}

\end{document}